%% file: pAdicIntegralsCrypto.tex
\documentclass[10pt]{amsart}

\input{SECTIONS/Settings.tex}
\input{SECTIONS/Commands.tex}

\begin{document}
	
	\begin{abstract}
We propose a bridge between oriented supersingular elliptic curves and the arithmetic topology of modular curves.  To an $\cO$-oriented supersingular curve, we attach a class in the relative homology group $H\left(X_0(N),C,\ZZ\right)$, i.e. modular symbols, compatible with the Hecke action. We then compute vectors of $\ell$-adic periods by pairing with weight-$2$ cusp forms via Coleman integration. This yields an explicit, computable map from short combinatorial homology representatives to truncated vectors in $(\ZZ/\ell^m\ZZ)^d$. Motivated by this encoding, we formulate the Modular Symbol Inversion (MSI) problem --recovering a short homology representative from its truncated $\ell$-adic period data-- and discuss its arithmetic structure, its relation to path problems on isogeny graphs and Bruhat–Tits trees, and potential applications to cryptographic constructions.
	\end{abstract}
	
\maketitle

\input{SECTIONS/Introduction.tex}

\input{SECTIONS/SupSingOrient.tex}

\input{SECTIONS/ModularSymbols.tex}

\input{SECTIONS/Representations.tex}

\input{SECTIONS/EquivalentConstructions.tex}

\input{SECTIONS/pAdicVectors.tex}

\input{SECTIONS/Example.tex}

\input{SECTIONS/ModSymbInvProb.tex}

\input{SECTIONS/CryptoConstructions.tex}

\input{SECTIONS/Conclusion.tex}

\input{SECTIONS/Bibliography.tex}
\end{document}

%% file: SECTIONS/Settings.tex
\usepackage{amsfonts}
\usepackage{amsmath}
\usepackage{amssymb}
\usepackage{amsthm}
\usepackage{array}
\usepackage[english]{babel}
\usepackage{calc}    
\usepackage{cite}
\usepackage{comment}
\usepackage{dashrule}
\usepackage{enumitem}
\usepackage{eso-pic}
\usepackage{eurosym}
\usepackage{fancyhdr}
\usepackage{float}
\usepackage{fontawesome}
\usepackage{graphicx}
\usepackage[hidelinks]{hyperref}
\usepackage{iftex}
\usepackage{ifthen}
\usepackage[utf8]{inputenc}	
\usepackage{lipsum}
\usepackage{mathtools}
\usepackage[expansion=true,protrusion=true]{microtype}
\usepackage{multicol}			
\usepackage{multirow}
\usepackage{parskip}
\usepackage{rotating}
\usepackage{setspace}
\usepackage{soul}
\usepackage{substr}
\usepackage{stmaryrd}
\usepackage[absolute]{textpos}
\usepackage{tikz}				
\usetikzlibrary{shapes,backgrounds,mindmap,trees}
\usetikzlibrary{arrows.meta, matrix, positioning,tikzmark,calc}
\usepackage{url}
\usepackage{wrapfig}
\usepackage{xcolor}
\usepackage{xifthen}
\usepackage{xparse}

\hypersetup{
	colorlinks   = true, 
	urlcolor     = black, 
	linkcolor    = black, 
	citecolor   = blue 
}

\title{From Orientations to $\ell$-adic Period Vectors}
\author{Leonardo Col\`o}
\date{}

\newtheoremstyle{theostyle}
{4pt} 
{} 
{} 
{} 
{\bf} 
{.} 
{.5em} 
{} 

\theoremstyle{theostyle}
\newtheorem{theorem}{Theorem}
\newtheorem{corollary}{Corollary}

\newtheorem{proposition}{Proposition}
\newtheorem{definition}[theorem]{Definition}
\newtheorem*{theorem*}{Theorem}
\newtheorem*{definition*}{Definition}

%% file: SECTIONS/Commands.tex
 \newcommand{\cO}{\mathcal{O}}   \newcommand{\cT}{\mathcal{T}}

\newcommand{\Cl}{\mathcal{C}\!\ell}
\newcommand{\GG}{\Gamma}

 \newcommand{\CC}{\mathbb{C}}
 
\newcommand{\FF}{\mathbb{F}} \newcommand{\HH}{\mathbb{H}}
\newcommand{\MM}{\mathbb{M}}
 \newcommand{\PP}{\mathbb{P}}
\newcommand{\QQ}{\mathbb{Q}} \newcommand{\RR}{\mathbb{R}}
\newcommand{\ZZ}{\mathbb{Z}}

\newcommand{\im}{\mathrm{Im}}

\newcommand{\GL}{\mathrm{GL}} \newcommand{\SL}{\mathrm{SL}}
 \renewcommand{\SS}{\mathrm{SS}}
\newcommand{\PGL}{\mathrm{PGL}}

\newcommand{\disc}{\mathrm{disc}}

\renewcommand{\fa}{\mathfrak{a}} \newcommand{\fb}{\mathfrak{b}} \newcommand{\fc}{\mathfrak{c}}

 \newcommand{\fA}{\mathfrak{A}}

\newcommand{\End}{\mathrm{End}}

\newcommand{\Aut}{\mathrm{Aut}}
\newcommand{\Pic}{\mathrm{Pic}}
\newcommand{\rank}{\mathrm{rank}}
\newcommand{\Stab}{\mathrm{Stab}}

%% file: SECTIONS/Introduction.tex
\section{Introduction}

Isogeny-based cryptography has emerged as one of the most promising families
of post-quantum public-key primitives, with schemes based on supersingular
elliptic curves, quaternion algebras, and class group actions.
Constructions such as CSIDH \cite{CSIDH}, OSIDH \cite{OSIDH}, SQISign \cite{SQISign}, and their variants exploit the
rich arithmetic, geometry and combinatorial structure of supersingular isogeny graphs to obtain compact keys and protocols with conjectured post-quantum
security.

In parallel, arithmetic geometers have long used modular symbols
and $\ell$-adic integration to study modular forms, modular curves, and the
arithmetic of elliptic curves. The modular-symbol formalism packages the homology
on the modular curve $X_0(N)$ in a combinatorial way, while overconvergent
modular symbols and harmonic cocycles on the Bruhat--Tits tree provide
effective algorithms for computing $\ell$-adic periods and $\ell$-adic $L$-values
without explicit projective models of the curves.

The central idea of this work is to use modular symbols and $\ell$-adic integrals
as an interface between oriented supersingular elliptic curves and
discrete $\ell$-adic data. Concretely, we propose to attach to an orientation
\[
\iota : \mathcal O \hookrightarrow \End(E)
\]
a homology class 
\[
\gamma(\iota) \in H_1\bigl(X_0(N),\{\text{cusps}\};\ZZ\bigr)
\]
via a class-group action on homology, and then to evaluate $\gamma(\iota)$
against weight-$2$ cusp forms by Coleman integration to obtain a truncated
$\ell$-adic period vector
\[
\Pi_m(\gamma(\iota))
\in (\ZZ/\ell^m)^d.
\]
This yields the algebraic--analytic pipeline
\begin{equation} \label{eq:pipeline-intro}
\iota
\;\longmapsto\; [\fa] \in \Pic(\cO)
\;\xrightarrow{\;\rho\;}
\gamma([\fa]) \in H_1(X_0(N),C;\ZZ)
\;\xrightarrow{\;\Pi_m\;}
\Pi_m(\gamma([\fa])) \in (\ZZ/\ell^m\ZZ)^d,
\end{equation}
where $C$ denotes the cusp set and $\rho$ is a homological representation of
the ideal class group $\Pic(\cO)$.

From a cryptographic perspective, this suggests a new family of hard problems
and primitives, distinct from but morally related to the supersingular isogeny
path problem and to lattice-based SIS/LWE. We highlight the following informal hardness
assumption:

\medskip
\noindent\textbf{Modular Symbol Inversion (MSI).}
\emph{Given a truncated $\ell$-adic period vector $y \in (\ZZ/\ell^m\ZZ)^d$ known to be
of the form $y=\Pi_m(\gamma^\star)$ for some ``short'' homology class
$\gamma^\star$ (with bounded path complexity), find any homology class
$\gamma$ of comparable complexity such that $\Pi_m(\gamma)=y$.}

\medskip

The MSI problem is encoded in the last part of \eqref{eq:pipeline-intro};
the supersingular/orientation and ideal-class layers simply provide one
way to sample short homology classes in a structured arithmetic way.
This new assumption is supported by the exponential
combinatorial complexity of homology paths and the absence of known
subexponential attacks.

%% file: SECTIONS/SupSingOrient.tex
\section{Supersingular elliptic curves and orientations}\label{sec:supersingular-orient}

In this section we recall basic facts about supersingular elliptic curves,
orders in imaginary quadratic fields, and orientations. We emphasize the
parametrization of oriented supersingular curves by ideal classes in an order
$\cO$, which underlies OSIDH \cite{OSIDH} and related constructions.

\subsection{Elliptic curves and isogenies}

We refer to~\cite{Silverman:ArithmeticEC} for a complete treatment. Throughout this work
we fix a field $k$ of positive characteristic $p$. When
$p>3$, an elliptic curve $E$ over $k$ is defined by a Weierstraß model
\[
E: y^2 = x^3 + Ax + B,\qquad A,B\in k,
\]
with non-vanishing discriminant $\Delta = -16(4A^3+27B^2)\neq 0$. The set
of $k$-rational points
\[
E(k)=\{(x,y)\in k^2 : y^2 = x^3 + Ax + B\}\,\cup\,\{O_E\}
\]
forms an abelian group under the usual chord--tangent law, with
$O_E$ as the neutral element.

The $j$-invariant of an elliptic curve $E$ in the above model is
\[
j(E) = 1728\cdot \frac{4A^3}{4A^3+27B^2},
\]
and two elliptic curves over $\overline{k}$ are isomorphic if and only if
they have the same $j$-invariant.

A separable isogeny between two elliptic curves defined over $k$
is a non-constant morphism of curves $\varphi: E_1 \longrightarrow E_2$
that is also a group homomorphism sending $O_{E_1}$ to $O_{E_2}$.
Its degree $\deg(\varphi)$ is its degree as a rational map. Isogenies
compose, and every non-constant isogeny of degree $n>1$ factors into a
composition of isogenies of prime degree whose product equals $n$.

When $\varphi$ has degree coprime to $p$, it is uniquely
determined by its kernel $\ker(\varphi)\subset E_1(\overline{k})$.
Conversely, every finite subgroup $G\subseteq E_1(\overline{k})$ of order
coprime to $p$ defines a separable isogeny
$\varphi_G:E_1\to E_1/G$, and $\varphi_G$ can be computed efficiently using
Vélu's formulas~\cite{Velu:Isogenies}.

\subsubsection*{Supersingular elliptic curves and their endomorphisms}

An elliptic curve
$E/k$ is supersingular if
it has no nontrivial $p$-torsion over
$\overline{\FF}_{p}$, i.e.,
$E(\overline{\FF}_{p})[p] = 0$.
More analytically, supersingular curves are characterized by the fact that their Newton polygon is a line segment of slope $1/2$, see \cite{Pries:NewtPoly}.

A fundamental theorem of Deuring \cite{Deuring} states that if $E$ is supersingular over
$\overline{\FF}_{p}$, then the $\QQ$-algebra
\[
\End^0(E) := \End(E)\otimes_\ZZ \QQ
\]
is the quaternion algebra $\mathfrak{A}_{p,\infty}$ ramified precisely at
$p$ and $\infty$. Moreover, $\End(E)$ is a maximal
order $R$ inside $\mathfrak{A}_{p,\infty}$.

\subsection{Orders and orientations}\label{Subsection:OrdersOrientations}

We define the notion of orientation on supersingular elliptic curves following \cite{OSIDH}.
Let $K$ be an imaginary quadratic field, and let $\cO_K$ be its ring of
integers. An \emph{order} in $K$ is a subring $\cO\subseteq\cO_K$ of finite
index such that $\cO$ is a free $\ZZ$-module of rank $2$. Every order has the
form
\[
\cO = \ZZ + f\cO_K
\]
for some integer $f\ge 1$, called the conductor. We will write
$\cO = \cO_f$ when we wish to emphasize the conductor.

\begin{definition}
A $K$-orientation on a supersingular elliptic curve $E/k$ is a homomorphism $\iota : K\to \End^0(E)$.
Let $\cO\subset \cO_K$ be an order in $K$. If $\iota(\cO)\subseteq\End(E)$, then $\iota$ is said to be an
$\cO$-orientation.  
We say that the orientation is primitive if
\[
\iota(\cO)= \iota(K)\cap \End(E),
\]
i.e. if $\cO$ is the largest quadratic order mapping inside $\End(E)$ via~$\iota$.
\end{definition}

We will always assume optimal orientations, and we will work up to oriented
isomorphism; two oriented supersingular elliptic curves $(E,\iota)$ and $(E',\iota')$ are
isomorphic if there exists an isomorphism $\phi:E\to E'$ such that $$\phi\circ \iota(a) = \iota'(a)\circ\phi \quad \text{for all } a\in\cO$$

A result of Onuki shows that
there exists an embedding $K\hookrightarrow \End^0(E)$ if and only if $p$ is
either inert or ramified in $K$, \cite{Onuki:OSIDH}. In this case there is a unique order
$\cO\subseteq\cO_K$ such that $\iota(\cO)=\iota(K)\cap \End(E)$; hence optimal orientations arise naturally from the arithmetic of $p$ in $K$.

The endomorphism ring $\End(E)$ carries a canonical character
\[
\rho:\End(E)\longrightarrow \overline{\FF}_p,
\]
defined by the action of endomorphisms on the one-dimensional space of
invariant differentials: for all $\alpha\in\End(E)$,
\[
\alpha^* \omega_E = \rho(\alpha)\,\omega_E.
\]
Composing any $\cO$-orientation $\iota$ with the reduction map
$\End(E)\rightarrow\overline{\FF}_p$ yields a \emph{$p$-orientation} on $\cO$.
If $p$ ramifies in $K$, then $\rho$ takes values in $\FF_p$ and is
self-conjugate; if $p$ is inert, $\rho$ and its conjugate $\bar\rho$ give
two distinct $p$-orientations, related by Frobenius, see \cite{MOSIDH}.

We denote by $\SS_\cO(p)$ the set of supersingular elliptic curves equipped
with an optimal $\cO$-orientation, up to oriented isomorphism, and by
$\SS_\cO(\rho)$ the subset determined by the $p$-orientation induced by~$\rho$.
When $p$ is inert in $\cO$, we obtain two disjoint subsets
$\SS_\cO(\rho)$ and $\SS_\cO(\bar\rho)$ exchanged by Frobenius.  
When $p$ is ramified, these coincide.

\subsection{Ideal classes and oriented supersingular curves}

Let $\cO$ be an order in $K$, and let $\Pic(\cO)$ denote its proper
ideal class group: the group of invertible proper $\cO$-ideals modulo
principal ideals. For background on ideal classes in non-maximal orders we refer to \cite{Cox:Primes,Deuring,Kohel1996}.

Let $E\in\SS_\cO(p)$ be a $\cO$-oriented curve with $\End(E)\cong R\subset\fA_{p,\infty}$ and let $\fa\subset\cO$ be a proper invertible $\cO$-ideal.
Using the embedding $\iota_0$, define the left $R$-ideal
\begin{equation}\label{eq:Ia-definition}
	I_\fa \;:=\; R\cdot \iota_0(\fa)
	\;\subseteq\; R.
\end{equation}
Equivalently, $I_\fa = \{\,x\in R \mid x \cdot \iota_0(\cO) \subseteq \iota_0(\fa)\,\}$. 
The ideal $I_\fa$ is locally principal at
every finite prime~$\ell\ne p$ and its reduced norm generates the same ideal of $\ZZ$ as the quadratic norm of $\fa$, i.e. $\mathrm{nrd}(I_\fa)\sim N(\fa)$, \cite{Kohel1996}.

The associated finite subgroup of $E_0$ is defined by
\[
E_0[I_\fa] := \bigcap_{x\in I_\fa} \ker(x),
\]
and has order $N(\fa)$.
The quotient yields an isogeny
\begin{equation}\label{eq:phi_a}
	\phi_\fa : E_0 \longrightarrow E_\fa := E_0/E_0[I_\fa].
\end{equation}
Since each element of $I_\fa$ commutes with $\iota_0(\cO)$, the
isogeny $\phi_\fa$ preserves $\cO$-orientation. Let $\widehat{\phi_\fa}$ denote the dual isogeny of $\phi_\fa$,
\begin{equation}\label{eq:orientation-induced}
	\iota_\fa(\alpha) \;:=\; \frac{1}{\deg\phi_\fa}\; \phi_\fa \circ \iota_0(\alpha) \circ \widehat{\phi_\fa}
	\qquad (\alpha\in\cO).
\end{equation}
is an optimal embedding $\cO\hookrightarrow \End(E_\fa)$, \cite{Deuring,Kohel1996}.

We call the pair $(E_\fa,\iota_\fa)$ the oriented isogeny transform
of $(E_0,\iota_0)$ by~$\fa$.

\begin{theorem}
	\label{thm:Pic-torsor}
The set $\SS^{pr}_{\cO}(\rho)$ of optimally $\cO$-oriented supersingular
	elliptic curves with $p$-orientation $\rho$ is a torsor for
	$\Pic(\cO)$.
\end{theorem}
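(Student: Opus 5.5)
We prove that $\SS^{pr}_{\cO}(\rho)$ is a $\Pic(\cO)$-torsor in three steps: define the action, show it is free, and show it is transitive. The action is
\[
[\fa]\ast(E,\iota) := (E_\fa,\iota_\fa),
\]
using the construction \eqref{eq:Ia-definition}--\eqref{eq:orientation-induced} with $(E,\iota)$ in place of $(E_0,\iota_0)$.

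\textbf{Step 1: well-definedness.} We check four things.
\begin{enumerate}[label=(\alph*)]
\item \emph{Independence of the representative.} Let $\fa' = \lambda\fa$ with $\lambda\in K^\times$. Then $I_{\fa'} = I_\fa\,\iota(\lambda)$. The two kernels $E[I_\fa]$ and $E[I_{\fa'}]$ then differ by the endomorphism $\iota(\lambda)$ (after clearing denominators). Hence the quotients are related by an isomorphism, and it intertwines $\iota_\fa$ and $\iota_{\fa'}$ because $\iota(\lambda)$ commutes with $\iota(\cO)$. It is convenient to first reduce to integral ideals of norm prime to $p$ and to the conductor $f$; every class contains one.
\item \emph{Primitivity.} Primitivity is preserved because $N(\fa)$ is coprime to $f$. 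If $\iota_\fa$ extended to a larger order $\cO'\supsetneq\cO$, transporting back along $\widehat{\phi_\fa}$ would force $\iota$ to extend to $\cO'$ as well. This is the standard ascending/descending argument of Kohel for horizontal isogenies.
\item \emph{Preservation of $\rho$.} The $p$-orientation is preserved since $\phi_\fa$ is separable of degree prime to $p$. Pulling back the invariant differential gives $\phi_\fa^*\omega_{E_\fa}=c\,\omega_E$ with $c\neq 0$. Combined with $\phi_\fa\circ\iota(\alpha)=\iota_\fa(\alpha)\circ\phi_\fa$, this yields the same character on $\cO$.
\item \emph{Action axioms.} The compatibility $[\fa][\fb]\ast x = [\fa]\ast([\fb]\ast x)$ follows from $E[I_{\fa\fb}] = \phi_\fb^{-1}(E_\fb[I_\fa])$, which is the kernel of the composite isogeny.
\end{enumerate}

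\textbf{Step 2: freeness.} Suppose $[\fa]\ast(E,\iota)\cong(E,\iota)$ with $\fa$ integral. Composing $\phi_\fa$ with the oriented isomorphism gives an endomorphism $\beta\in\End(E)$ of degree $N(\fa)$ that commutes with $\iota(\cO)$. The centralizer of $\iota(K)$ in $\fA_{p,\infty}$ is $\iota(K)$ itself. Hence $\beta=\iota(b)$ for some $b\in K\cap\iota^{-1}(\End(E))=\cO$, using optimality. Comparing kernels gives $E[\iota(b)]=E[I_\fa]$. By the Deuring kernel–ideal correspondence (locally principal ideals correspond to kernels), this implies $\fa=b\cO$. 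So $[\fa]$ is trivial.

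\textbf{Step 3: transitivity.} This is the main obstacle. Take $(E,\iota),(E',\iota')\in\SS^{pr}_\cO(\rho)$ and choose any isogeny $\psi:E\to E'$ of degree prime to $p$; one exists since the supersingular isogeny graph is connected. The plan is to modify $\psi$ so that it becomes an oriented (horizontal) isogeny.

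Consider the $\cO$-bimodule $\mathrm{Hom}(E,E')$, with $\cO$ acting by $\iota$ on the right and $\iota'$ on the left. Its subspace
\[
H=\{\psi : \psi\iota(\alpha)=\iota'(\alpha)\psi \text{ for all } \alpha\in\cO\}
\]
has rank $2$ over $\ZZ$ inside a rank-$4$ lattice. The first thing I would try is to prove $H\neq 0$. The naive approach is to apply the projector $\psi\mapsto \psi + \iota'(\omega)\psi\iota(\bar\omega)$-type averaging. In $\mathrm{Hom}^0(E,E')$ there are exactly two eigen-components, the $\iota$-$\iota'$-linear part and the conjugate-linear part. The condition that $\iota$ and $\iota'$ induce the same $p$-orientation $\rho$ is precisely what ensures the linear part is nonzero. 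A conjugate-linear isogeny would have to intertwine $\rho$ with $\bar\rho$ on the $p$-adic side, which is possible only when $\rho=\bar\rho$; in the ramified case both parts can be used.

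Given $0\neq\psi\in H$, the left $\End(E)$-ideal $J$ attached to $\ker\psi$ is stable under right multiplication by $\iota(\cO)$. Hence $J\cap\iota(K)$ is $\iota(\fa)$ for some proper (by optimality, invertible) $\cO$-ideal $\fa$. One checks that $\ker\psi=E[I_\fa]$, possibly after dividing $\psi$ by an integer. Then $(E',\iota')\cong[\fa]\ast(E,\iota)$.

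Making the linear/conjugate-linear decomposition rigorous over the integral structure, and extracting $\fa$ from $J$, are the delicate points. There I would fall back on Onuki's results \cite{Onuki:OSIDH} and Kohel's thesis \cite{Kohel1996} for the needed local statements.
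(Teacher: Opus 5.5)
The paper gives no proof of this theorem. It quotes the result from Onuki and Col\`o--Kohel, where it is deduced from reduction and Deuring lifting of CM curves. Your direct route through $\cO$-equivariant isogenies is a legitimate alternative, and Steps 1 and 2 are essentially fine. Step 3, however, has a genuine gap, located exactly where the hypothesis on $\rho$ must enter. Your claim that $\rho'=\rho$ ``is precisely what ensures the linear part is nonzero'' is false. For any isogeny $\psi_0:E\to E'$, the embeddings $\iota$ and $\alpha\mapsto\psi_0^{-1}\iota'(\alpha)\psi_0$ of $K$ into $\End^0(E)$ are conjugate by some $u$ (Skolem--Noether), and an integer multiple of $\psi_0u$ is a nonzero element of $H$. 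So $H$ has rank $2$ for \emph{every} pair of primitively $\cO$-oriented curves, including pairs where $\iota'$ has $p$-orientation $\bar\rho$. The conjugate-linear part is likewise never zero; only \emph{separable} equivariant isogenies are forced to preserve the character. Consequently your argument never uses $\rho'=\rho$, and it proves too much. For $p$ inert it would give transitivity on all of $\SS^{pr}_{\cO}(p)$, contradicting your own Step 1(c) together with $\SS_{\cO}(\rho)\cap\SS_{\cO}(\bar\rho)=\emptyset$. The step that actually fails is ``one checks that $\ker\psi=E[I_{\fa}]$, possibly after dividing $\psi$ by an integer''. If $\iota'$ has $p$-orientation $\bar\rho\neq\rho$, every nonzero $\psi\in H$ has $v_p(\deg\psi)$ odd. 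But $v_p(N(\fa))$ is even for every invertible $\cO$-ideal, and dividing by integers does not change the parity.

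The missing idea is to split off the inseparable part of $\psi$. Write $\psi=\psi_s\circ F^k$, with $F^k:E\to E^{(p^k)}$ the $p^k$-power Frobenius and $\psi_s$ separable. Since $F^k\circ\iota(\alpha)=\iota(\alpha)^{(p^k)}\circ F^k$, the factor $\psi_s$ intertwines $\iota^{(p^k)}$, whose $p$-orientation is $\rho^{p^k}$, with $\iota'$. Your argument from 1(c) then yields $\rho'=\rho^{p^k}$.

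If $p$ is inert, $\rho^p=\bar\rho\neq\rho$, so $\rho'=\rho$ forces $k$ to be even. Since $E$ is supersingular, $[p]$ is purely inseparable of degree $p^2$, i.e.\ it equals $F^2$ up to an isomorphism. Hence $\psi=[p^{k/2}]\circ\psi'$ with $\psi'\in H$ separable. If $p$ ramifies, $F$ is, up to isomorphism, the oriented isogeny attached to the prime $\mathfrak p$ with $\mathfrak p^2=p\cO$, so the inseparable part is absorbed by $[\mathfrak p]^k$.

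Only after this reduction does your prime-to-$p$ analysis apply, and there ``by optimality'' should be made precise. For each prime $q\neq p$, $T_q(E)$ is free of rank one over $\cO\otimes\ZZ_q$. The lattice $(\psi'_{*})^{-1}T_q(E')\supseteq T_q(E)$ has multiplier ring equal to the $q$-part of the order of $\iota'$. Primitivity of $\iota'$ (not just of $\iota$) makes this lattice invertible, hence of the form $a_q^{-1}T_q(E)$ with $a_q\in\cO\otimes\ZZ_q$. The resulting locally principal ideal $\fa$ then satisfies $\ker\psi'=E[I_{\fa}]$. You should also state the implicit hypotheses that $p$ does not split in $K$ and $p\nmid f$; otherwise $\SS^{pr}_{\cO}(\rho)$ is empty.
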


Thus, isomorphism classes of supersingular elliptic curves with a fixed optimal
orientation are parameterized by~$\Pic(\cO)$.
\color{black}

\subsection{Oriented isogenies and the Bruhat--Tits tree}
\label{subsec:BT-tree}

The action of $\Pic(\cO)$ on optimally $\cO$-oriented supersingular elliptic
curves admits an interpretation in terms of $\ell$--adic geometry, through the
Bruhat--Tits tree associated with $\mathrm{PGL}_2(\QQ_\ell)$.
This viewpoint will later serve as a bridge between orientations and modular
symbols. For more information on Bruhat--Tits tree one can check \cite{Iezzi:BTTrees} and \cite{BT:Book}.

\subsubsection*{The Bruhat--Tits tree}

Let $\cT_\ell$ denote the infinite $(\ell+1)$-regular \cite{Serre:Trees} Bruhat--Tits tree of
$\mathrm{PGL}_2(\QQ_\ell)$.
Its vertices correspond to homothety classes of $\ZZ_\ell$-lattices in $\QQ_\ell^2$,
or equivalently, to isomorphism classes of maximal orders in the quaternion
algebra $\mathrm{Mat}_2\left(\QQ_\ell\right)$.  
Two vertices are connected by an edge precisely when the corresponding
lattices differ by index $\ell$, or equivalently, when there exists an isogeny
of degree $\ell$ between the corresponding supersingular elliptic curves.

Let $\Gamma := R^\times$, where $R\cong\End(E_0)$ is a fixed maximal order.
Then $\Gamma$ acts on $\cT_\ell$ without inversion, and the quotient $
\Gamma \backslash \cT_\ell$ 
is a finite graph, canonically identified with the $\ell$-isogeny
graph of supersingular elliptic curves, where each vertex is a curve $E$
and edges correspond to $\ell$-isogenies, \cite[\S 4.3]{Iezzi:BTTrees}.

\noindent{\bf Remark.}
Passing from $\cT_\ell$ to the quotient $\Gamma\backslash\cT_\ell$ identifies
vertices and edges that differ by the $\Gamma$-action, thereby folding the
infinite, cycle-free tree into a finite graph in which cycles may appear.
\subsubsection*{Oriented vertices}

Fix an optimal embedding $\iota_0:\cO\hookrightarrow R\cong\End(E_0)$.
For any $\cO$-oriented supersingular elliptic curve $(E,\iota)$, the image
$\iota(\cO)$ determines a copy of $\cO$ inside the endomorphism ring of~$E$.
This additional structure restricts which vertices and edges in
$\Gamma\backslash \cT_\ell$ are permissible.

\begin{definition}
	The \emph{oriented supersingular isogeny graph} is the subgraph of
	$\Gamma\backslash \cT_\ell$ consisting of vertices $\{(E,\iota)\}$ and edges
	corresponding to horizontal $\ell$-isogenies respecting the $\cO$-orientation.
\end{definition}

\subsubsection*{Ideal classes as oriented paths}

Let $\fa$ be a proper invertible $\cO$-ideal, and let
$(E_\fa,\iota_\fa)$ be the oriented curve associated to $\fa$ as in
Section~\ref{Subsection:OrdersOrientations}.
Let $I_\fa \subset R$ be the left ideal from \eqref{eq:Ia-definition}.
Then:

\begin{proposition}
	\label{prop:path-length}
	The ideal $I_\fa$ determines a unique geodesic path in $\Gamma\backslash\cT_\ell$
	starting at $(E_0,\iota_0)$ and ending at $(E_\fa,\iota_\fa)$.
	The length of the path is equal to the $\ell$--adic valuation of the ideal norm:
	\[
	\mathrm{length}(\fa)=v_\ell(N(\fa)),
	\]
	where $N(\fa)$ denotes the positive integer norm of~$\fa$.
\end{proposition}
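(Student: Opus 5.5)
The plan is to reduce the statement to the local structure of $I_\fa$ at $\ell$, where everything becomes lattice combinatorics in $\cT_\ell$. Since $I_\fa$ is locally principal at every finite prime $\ell\neq p$ (as recalled above, following \cite{Kohel1996}), we can write $I_\fa\otimes\ZZ_\ell = (R\otimes\ZZ_\ell)\,\alpha_\ell$ for some $\alpha_\ell\in (R\otimes\ZZ_\ell)\cap \GL_2(\QQ_\ell)$ after fixing $R\otimes\ZZ_\ell\cong \mathrm{Mat}_2(\ZZ_\ell)$. The reduced norm satisfies $v_\ell(\mathrm{nrd}(\alpha_\ell)) = v_\ell(\mathrm{nrd}(I_\fa)) = v_\ell(N(\fa))$, using $\mathrm{nrd}(I_\fa)\sim N(\fa)$. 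We may tacitly restrict to the $\ell$-primary part of $\fa$, since only the $\ell$-part of the kernel $E_0[I_\fa]$ moves us in the $\ell$-isogeny graph; implicitly $\fa$ should be taken of $\ell$-power norm, and I will state this normalization at the outset.

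Let $v_0$ be the vertex of $\cT_\ell$ given by the standard lattice $L_0=\ZZ_\ell^2$, corresponding to $(E_0,\iota_0)$. The first step is to set $L_\fa := L_0\,\alpha_\ell$ (or the dual convention matching kernels), and to use the elementary divisor theorem to write $\alpha_\ell = u_1\,\mathrm{diag}(\ell^{a},\ell^{b})\,u_2$ with $u_i\in\GL_2(\ZZ_\ell)$ and $a+b = v_\ell(N(\fa))$. The combinatorial distance in $\cT_\ell$ between $[L_0]$ and $[L_\fa]$ is then $|a-b|$. There is therefore a chain $L_0\supset L_1\supset\cdots\supset L_n=L_\fa$ with successive index $\ell$ and $n=a+b$, obtained by factoring $\phi_\fa$ as a composition of $n$ isogenies of degree $\ell$, as recalled in the subsection on isogenies. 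Projecting this chain to $\Gamma\backslash\cT_\ell$ gives a path from $(E_0,\iota_0)$ to $(E_\fa,\iota_\fa)$. The endpoint is correct because $E_0/E_0[I_\fa]=E_\fa$ by \eqref{eq:phi_a}. Orientations are respected at each step because every intermediate kernel is stable under $\iota_0(\cO)$, being an $\cO$-submodule of $E_0[I_\fa]$.

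Next comes the geodesic and length claim, which requires $\min(a,b)=0$, i.e. that $\alpha_\ell$ is not divisible by $\ell$. This is where $\fa$ being a proper invertible $\cO$-ideal enters. We may assume $\fa$ is primitive, meaning not divisible by $(\ell)$; otherwise $\ell\cO$ factors act trivially on the class and give backtracking $[\ell]$ steps. Under this assumption $E_0[I_\fa]$ is cyclic, so the chain has no backtracking, and the path in $\cT_\ell$ is the unique geodesic of length $n=v_\ell(N(\fa))$. Uniqueness in the tree is automatic. For the quotient, "unique" must mean the image of this canonical lift. The lift itself is unique because the kernel $E_0[I_\fa]$, and hence every intermediate subgroup of its cyclic filtration, is uniquely determined by $I_\fa$.

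The main obstacle I expect is the primitivity/cyclicity step. One has to show that for a primitive $\cO$-ideal the kernel $E_0[I_\fa]_\ell$ is cyclic, not merely of order $\ell^{n}$. The first approach I would try is to decompose $\fa$ over the primes $\mathfrak l$ above $\ell$ as $\mathfrak l^{r}\bar{\mathfrak l}^{s}$ and use $\mathfrak l\bar{\mathfrak l}=(\ell)$: primitivity forces $rs=0$, which gives a cyclic kernel via the horizontal $\mathfrak l$-isogeny steps. The ramified case and the case $\ell\mid f$ need separate care, because horizontal steps are then constrained by the volcano structure. A secondary subtlety is that $\Gamma\backslash\cT_\ell$ folds the tree, so "geodesic" in the quotient should be read as non-backtracking of minimal lift length. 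I would state this interpretation explicitly rather than claim minimal length in the finite graph, where shorter cycles may exist.
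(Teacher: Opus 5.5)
Your route is the same as the paper's: localize $I_\fa$ at $\ell$, use local principality to get a generator, read off a chain of index-$\ell$ sublattices of length $v_\ell(N(\fa))$, and identify the endpoint with $E_0/E_0[I_\fa]=E_\fa$. The paper's sketch stops there. Your extra conditions are not cosmetic, because without them the statement is false.

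\begin{itemize}
\item If $\ell\nmid N(\fa)$ and $[\fa]\neq 1$, the formula gives length $0$, yet $(E_\fa,\iota_\fa)\not\cong(E_0,\iota_0)$ since the $\Pic(\cO)$-action is free.
\item For $\fa=(\ell)$ one has $I_\fa=\ell R$ and $(E_\fa,\iota_\fa)\cong(E_0,\iota_0)$, yet $v_\ell(N(\fa))=2$.
\end{itemize}

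So primitivity is a hypothesis, not a reduction (``we may assume''): replacing $\fa$ by $\ell^{-1}\fa$ changes both the path and the right-hand side. State it alongside the $\ell$-power-norm condition. Also keep your reading of ``unique geodesic'' as the image of the tree geodesic, which the paper leaves implicit.

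The real gap is the step you call the main obstacle. You sketch cyclicity only in the split case and leave the ramified and $\ell\mid f$ cases open. For $\ell\mid f$ your proposed argument cannot work: the factorization $\fa=\mathfrak{l}^r\bar{\mathfrak{l}}^s$ into invertible primes above $\ell$ is unavailable, and the steps are not horizontal, since the path climbs and then descends the volcano.

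The missing ingredient is the optimality of $\iota_0$, which settles every case at once. The chain of equivalences is:
\begin{itemize}
\item $E_0[I_\fa]\supseteq E_0[\ell]$ holds iff every element of $I_\fa$ factors through $[\ell]$, i.e.\ $I_\fa\subseteq \ell R$.
\item Since $I_\fa=R\,\iota_0(\fa)$, this is equivalent to $\iota_0(\ell^{-1}\fa)\subseteq \iota_0(K)\cap R=\iota_0(\cO)$.
\item That in turn is equivalent to $\fa\subseteq \ell\cO$.
\end{itemize}

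Hence for primitive $\fa$ of $\ell$-power norm, the kernel does not contain $E_0[\ell]$ and is cyclic of order $\ell^{n}$ with $n=v_\ell(N(\fa))$. In your local language, $\alpha_\ell$ may be taken to be $\iota_0(a_\ell)$ for a local generator $a_\ell$ of $\fa\otimes\ZZ_\ell$, and it does not lie in $\ell(R\otimes\ZZ_\ell)$. So the elementary divisors are $(1,\ell^{n})$ and $|a-b|=a+b=n$.

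The intermediate kernels are then the subgroups $\ell^{k}G$ of the cyclic kernel $G$, hence $\iota_0(\cO)$-stable. This is what justifies your orientation claim, which is not automatic for an arbitrary degree-$\ell$ factorization of a noncyclic kernel. With this argument, no case distinction between split, ramified and $\ell\mid f$ is needed.
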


\begin{proof}
	Reduction of $I_\fa$ at $\ell$- yields a $\ZZ_\ell$-lattice in $\QQ_\ell^2$ whose
	homothety class corresponds to a vertex of $\cT_\ell$.
	Multiplication by a local generator of $\fa$ at $\ell$- induces a sequence of
	index-$\ell$- sub-lattices, hence edges.  
	Since invertible $\cO$-ideals are locally principal at $\ell$, the path is
	well-defined, and its length is $v_\ell(N(\fa))$, matching the valuation of~$I_\fa$.
	The endpoint corresponds to the order $\End(E_\fa)$, giving the orientation
	$\iota_\fa$ by~\eqref{eq:orientation-induced}.
\end{proof}

\begin{corollary}
	Each ideal class $[\fa]\in\Pic(\cO)$ corresponds to the homotopy class of
	oriented paths in $\Gamma\backslash\cT_\ell$ starting at $(E_0,\iota_0)$.
\end{corollary}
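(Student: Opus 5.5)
The plan is to deduce the corollary from Theorem~\ref{thm:Pic-torsor} and Proposition~\ref{prop:path-length}, by building a map from $\Pic(\cO)$ to homotopy classes of oriented paths in $\Gamma\backslash\cT_\ell$ starting at $(E_0,\iota_0)$ and then showing it is well defined and bijective. First I reduce to ideals of $\ell$-power norm. Every class $[\fa]\in\Pic(\cO)$ contains an invertible ideal whose norm is supported at primes of our choosing (the standard prime-avoidance fact for proper ideals, see \cite{Cox:Primes}). The ideal classes above $\ell$ generate the relevant part of $\Pic(\cO)$; where they do not, one passes to the subgroup they generate, which is the group that actually acts on the $\ell$-isogeny graph. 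So I may take a representative $\fa$ with $N(\fa)=\ell^n$.

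Second, I attach a path to $\fa$. Proposition~\ref{prop:path-length} gives a geodesic from $(E_0,\iota_0)$ to $(E_\fa,\iota_\fa)$ of length $n$. By~\eqref{eq:orientation-induced} each step is a horizontal $\ell$-isogeny respecting the orientation, so the path lies in the oriented graph.

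Third, I check that the homotopy class does not depend on the representative of $[\fa]$. If $\fb=\alpha\fa$ with $\alpha\in K^\times$, then $I_\fb=I_\fa\,\iota_0(\alpha)$. The two lattices at $\ell$ therefore differ by an element of $\iota_0(K^\times)$, and in the quotient their endpoints agree up to $\Gamma$. The two paths share a common endpoint: $(E_\fa,\iota_\fa)\cong(E_\fb,\iota_\fb)$ as oriented curves, by the torsor property. They then differ by a closed loop, namely the image of a path in $\cT_\ell$ between $\Gamma$-equivalent vertices. In the tree $\cT_\ell$ the lifts are unique geodesics, and homotopy in the quotient is taken relative to the $\Gamma$-action. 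So this loop is trivial in the homotopy sense used here, meaning homotopy of paths in $\cT_\ell$ modulo $\Gamma$, and the map is well defined.

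Finally, for bijectivity I use Theorem~\ref{thm:Pic-torsor}. The endpoint of the path attached to $[\fa]$ is $[\fa]\cdot(E_0,\iota_0)$. Since the action is free, distinct classes give distinct endpoints, hence distinct path classes, so the map is injective. Conversely, every oriented path from $(E_0,\iota_0)$ is a composition of horizontal $\ell$-isogenies. Each such isogeny has kernel $E[\mathfrak l]$ for a prime $\mathfrak l\mid\ell$ of $\cO$, so the path equals the path of the product ideal, which gives surjectivity.

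The main obstacle is the third step. The notion of ``homotopy class'' in the quotient graph $\Gamma\backslash\cT_\ell$ has to be made precise so that paths with the same endpoint are identified, while the class still remembers the orientation. I would first try to interpret it as the $\Gamma$-orbit of a geodesic in $\cT_\ell$, which makes well-definedness a uniqueness-of-geodesics statement in the tree.
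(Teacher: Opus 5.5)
Your third step does not go through, and the uniqueness of geodesics in $\cT_\ell$ that you invoke actually proves the opposite of what you need. Take $\ell$ split in $\cO$ with $\ell\nmid f$, a prime $\mathfrak l\mid\ell$ whose class has order $h\ge 1$, and write $\mathfrak l^h=(\alpha)$. Then $\fa=\mathfrak l^k$ and $\fb=\mathfrak l^{k+h}=\alpha\fa$ lie in the same class. Proposition~\ref{prop:path-length} attaches to them paths from $(E_0,\iota_0)$ with the same endpoint but of lengths $k$ and $k+h$; the second is the first preceded by the horizontal cycle of length $h$ traced out by $(\alpha)$. Since $E_0[\mathfrak l^h]=\ker\iota_0(\alpha)$ is cyclic, that cycle lifts from a fixed $\tilde v_0$ to the geodesic ending at $\iota_0(\alpha)^{\pm1}\tilde v_0$, a vertex at distance $h$ from $\tilde v_0$. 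So the loop you want to discard lifts to a non-closed path. Moreover, $\Gamma$ acts by isometries, so the two lifted geodesics, of lengths $k$ and $k+h$, are not in the same $\Gamma$-orbit; your proposed notion of homotopy does not identify them either. The same example defeats bijectivity. $\Pic(\cO)$ is finite, while going $n$ times around this cycle gives infinitely many pairwise inequivalent oriented paths from $(E_0,\iota_0)$, with lifts ending at distance $nh$ from $\tilde v_0$. Your injectivity and surjectivity arguments in fact only use endpoints. The only equivalence under which your third step holds is ``same endpoint in the quotient'', and that is not homotopy in a graph with cycles.

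So the statement must be changed before it can be proved, and there are two honest versions. The first replaces homotopy classes by endpoints; then the corollary is just Theorem~\ref{thm:Pic-torsor}, with Proposition~\ref{prop:path-length} supplying a representative path. The second keeps homotopy and changes the source: $\ell$-primitive ideals supported above $\ell$ correspond to homotopy classes of oriented paths from $(E_0,\iota_0)$, equivalently to endpoints of their lifts from $\tilde v_0$. The classes these ideals generate in $\Pic(\cO)$ then correspond to those endpoints modulo translation by $\iota_0\bigl(\cO[1/\ell]^\times\bigr)$. The paper gives no argument beyond reading the corollary off Proposition~\ref{prop:path-length}, so it never confronts the dependence on the representative that you correctly singled out.

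Two further points need repair in either version. First, your opening step misquotes prime avoidance: it provides representatives of norm \emph{coprime} to a given integer, not of $\ell$-power norm. Representatives of $\ell$-power norm exist only for classes in the subgroup generated by the primes above $\ell$. That subgroup is trivial when $\ell$ is inert in $K$, and unavailable when $\ell\mid f$ because those primes are not invertible. Restricting to it is therefore a change of hypothesis that has to be stated. Second, identifying the endpoints for $\fa$ and $\alpha\fa$ modulo $\Gamma$ requires $\iota_0(\alpha)\in\Gamma$. That means $\Gamma=(R[1/\ell])^\times$ modulo scalars, not the finite group $R^\times$ of \S\ref{subsec:BT-tree}; this choice is also what makes $\Gamma\backslash\cT_\ell$ the finite $\ell$-isogeny graph.
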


\subsection{Horizontal and vertical isogenies; volcano picture}
\label{subsec:volcano}

Thus far we have fixed an order $\cO=\cO_f\subset K$ and studied the action of
its class group on $\cO$-oriented supersingular elliptic curves.  In practice,
one may vary the conductor $f$, and obtain a richer picture by considering
orientations by the family of orders
\[
\cO_f=\ZZ+f\cO_K, \qquad f\ge 1.
\]
An oriented curve $(E,\iota)$ then implicitly carries the information of
the order $\iota(\cO_f)=\iota(K)\cap\End(E)$, and isogenies between such
curves do not necessarily preserve $f$.

\subsubsection*{Horizontal versus vertical isogenies}

Let $(E,\iota)$ be an optimally $\cO_f$-oriented supersingular elliptic curve, and 
let $\phi:E\to E'$ be an isogeny of prime degree $\ell\neq p$. We endow $E'$ with the induced
$K$-action
\[
\iota'(\alpha)\ :=\ \frac{1}{\ell}\,\phi\circ \iota(\alpha)\circ \widehat{\phi}
\qquad(\alpha\in K),
\]
so that $\iota'$ is a ring homomorphism $K\to \End^0(E')$.
We then define the induced oriented order on $E'$ 
\[
\cO' :=\ \iota'(K)\cap \End(E')\ \subseteq\ \iota'(K).
\]
\begin{definition}
We say that $\phi$ is \emph{horizontal} if $\cO'=\iota'(\cO_f)$
equivalently, if $\iota'|_{\cO_f}$ is again an \emph{optimal} embedding
$\cO_f\hookrightarrow \End(E')$.  In this case the conductor is preserved.
We call $\phi$ \emph{vertical} if $\cO'\neq \iota'(\cO_f)$.
Equivalently, the induced order on $E'$ is a different quadratic order
$\cO'=\cO_{f'}\subset K$ with $f'\neq f$.
In this case the conductor changes. 
\end{definition}

Horizontal $\ell$-isogenies are precisely those compatible with the given
$\cO_f$-action; when $\ell\nmid f$ they are parametrized by proper invertible
$\cO_f$-ideals of norm $\ell$.  Vertical $\ell$-isogenies occur exactly when the
kernel has local constraints at primes dividing the conductor (in particular
at $\ell\mid f$), and they move between different conductors.

\subsubsection*{Volcano structure}

When $\ell$ is a prime dividing the conductor $f$, the $\ell$-primary part of
$\End(E)$ may change under an $\ell$-isogeny.  More precisely, if 
$\ell\mid f$ and $\phi$ is an $\ell$-isogeny, then:
\[
\End(E')\cap\iota(K) \;\in\; \{\cO_f,\cO_{f/\ell},\cO_{\ell f}\}.
\]
Hence the vertices corresponding to oriented curves with endomorphism ring
$\cO_f$ form a ``horizontal layer,'' while $\ell$-isogenies may climb upward
toward smaller conductor (larger order) or descend toward larger conductor.
This has the typical shape of an isogeny \emph{volcano}, familiar from
ordinary elliptic curves, \cite{Kohel1996,Sutherland:Volcano}.

%% file: SECTIONS/ModularSymbols.tex
\section{Modular symbols and relative homology}
\label{sec:modular-symbols}

We now recall modular symbols, the relative homology of $X_0(N)$, and the
action of Hecke operators. We fix a positive integer $N\ge 1$ throughout this
section.
Standard references include \cite{DiamonShurman:ModulaFroms,Shimura:AutoFunc,Koblitz:EllipticModular,Stein:ModularForms}.

\subsection{Modular curves and modular forms}
\subsubsection*{Congruence subgroups}

Let $\HH:=\{z\in\CC:\im(z)>0\}$
be the upper half-plane.  The group $\GL_2^+(\RR)$ acts on $\HH$ by fractional
linear transformations
\[
\gamma\cdot z = \frac{az+b}{cz+d}
\qquad\text{for}\qquad
\gamma=\begin{pmatrix}a&b\\c&d\end{pmatrix}
\]
and this restricts to an action of $\SL_2(\ZZ)$. We denote $\Gamma(N)$ the kernel of the reduction map $\SL_2(\ZZ)\to\SL_2(\ZZ/N\ZZ)$ and we call it the principal congruence subgroup of level $N$. A {\it congruence subgroup} is any subgroup of $\SL_2(\ZZ)$ that contains $\Gamma(N)$ for some $N$.
In particular, we will work with
\[
\Gamma_0(N)
=\left\{
\begin{pmatrix}a&b\\c&d\end{pmatrix}\in\SL_2(\ZZ)\;:\;c\equiv 0\pmod N
\right\}.
\]
The set of cusps of $\Gamma_0(N)$ is naturally identified with
$\Gamma_0(N)\backslash\PP^1(\QQ)$, where $\PP^1(\QQ)=\QQ\cup\{\infty\}$.
See \cite[\S1.2]{DiamonShurman:ModulaFroms} or \cite[\S1.3]{Stein:ModularForms}.


\subsubsection*{The modular curve $X_0(N)$ and its cusps}

The quotient
\[
Y_0(N):=\Gamma_0(N)\backslash\HH.
\]
defines a non-compact Riemann surface or, equivalently, a smooth complex analytic
manifold, which admits a canonical compactification by adjoining finitely
many cusps:
\[
X_0(N)=Y_0(N)\sqcup C,
\qquad
C\simeq \Gamma_0(N)\backslash\PP^1(\QQ).
\]
The compact Riemann surface $X_0(N)$ has a canonical model over $\QQ$ \cite[\S7]{Milne:Modular} and is
the coarse moduli space parametrizing elliptic curves equipped with a cyclic
subgroup of order $N$ or, equivalently, a $\Gamma_0(N)$-level structure, \cite{DeligneRap:Schemas}.
We write $g=g(X_0(N))$ for its genus, $C=\{c_1,\dots,c_c\}$ for the set of cusps and $c=\#C$ for their number.

\subsubsection*{Modular forms and cusp forms of weight $2$}

A holomorphic function $f:\HH\to\CC$ is a weight-$2$ modular form for
$\Gamma_0(N)$ if
\[
f(\gamma z)\,(cz+d)^{-2}=f(z)
\qquad\text{for all }\gamma=\begin{pmatrix}a&b\\c&d\end{pmatrix}\in\Gamma_0(N),
\]
and if it is holomorphic at every cusp, in the sense of having a Fourier
expansion with no negative powers at each cusp. The space of such forms is denoted
$M_2(\Gamma_0(N))$.

A modular form $f\in M_2(\Gamma_0(N))$ is a \emph{cusp form} if its Fourier
expansion at each cusp has vanishing constant term; the subspace of cusp forms
is denoted $S_2(\Gamma_0(N))$.
For $f\in M_2(\Gamma_0(N))$ the differential
\[
\omega_f := f(z)\,dz
\]
is $\Gamma_0(N)$-invariant on $\HH$, hence giving a meromorphic differential
on $X_0(N)$ with possible poles only at cusps. There is a canonical identification
\[
S_2(\Gamma_0(N))\ \cong\ H^0\!\bigl(X_0(N),\Omega^1_{X_0(N)}\bigr),
\qquad
f\longmapsto \omega_f,
\]
and therefore $\dim_\CC S_2(\Gamma_0(N))=g(X_0(N))$, \cite[Ch. 3]{Wehler:Notes}.

\subsubsection*{Hecke operators on modular forms}

The Hecke operators arise from natural algebraic correspondences on $X_0(N)$.
For $n\ge 1$, the \emph{Hecke correspondence} $T_n$ is induced by the finite
correspondence on $Y_0(N)$ that sends a point $(E,C)$ (with $C\subset E$ cyclic
of order $N$) to the formal sum on divisors
$$(E/C',\ (C+C')/C')$$
as $C'$ ranges over cyclic subgroups of $E$ of order $n$ and $C\cap C'=\{O\}$,
and extends to a correspondence on $X_0(N)$, see \cite[\S 1.3]{Darmon:Fermat} and \cite[\S 5.3]{DiamonShurman:ModulaFroms}.
Analytically, this correspondence is represented by a double coset operator
\[
\Gamma_0(N)\,\alpha\,\Gamma_0(N),
\qquad \alpha\in M_2(\ZZ),\ \det(\alpha)=n,
\]
acting on modular forms; see \cite[\S 5.1]{DiamonShurman:ModulaFroms}.

When $(n,N)=1$, the operator $T_n$ on weight-$2$ modular forms admits the usual
explicit formula \cite[\S 6.2]{Iwaniec:Forms}
\[
T_n(f)(z) \;=\; n\!\!\!\sum_{\substack{ad=n\\0\le b<d}}\!\!\! d^{-2}\:
f\!\left(\frac{az+b}{d}\right),
\]
and the operators $\{T_n\}_{n\ge 1}$ commute and preserve $S_2(\Gamma_0(N))$.
They are normal with respect to the Petersson inner product \cite[\S 5.5]{DiamonShurman:ModulaFroms}, so one can choose
an orthonormal basis of simultaneous eigenforms.

When $\ell\mid N$, the operator $U_\ell$ is the double-coset operator
$\Gamma_0(N)\begin{psmallmatrix}1&0\\0&\ell\end{psmallmatrix}\Gamma_0(N)$.
On weight-$2$ forms $f(q)=\sum_{n\ge1}a_n q^n$ it satisfies
\[
U_\ell (f)(q)=\sum_{n\ge1} a_{\ell n}\,q^n,
\]

\subsection{Relative homology and Manin symbols}\label{sec:ManinSymbols}
Let $C\subset X_0(N)$ denote the set of cusps on $X_0(N)$. The absolute homology $H_1(X_0(N);\ZZ)$ group is generated by homology
classes of singular 1-cycles on the Riemann surface $X_0(N)$, and more generally
encodes $1$-dimensional topological features of $X_0(N)$.  The \emph{relative homology} group $H_1(X_0(N),C;\ZZ)$
enlarges this by allowing $1$-chains whose boundary lies in $C$; in other
words, we consider paths on $X_0(N)$ whose endpoints are permitted to be
cusps, and we identify two such paths if their difference is homologous to a
sum of closed loops together with paths contained entirely in the cusps, \cite[3.2]{Stein:ModularForms}.

The group $H$ has a very concrete description in terms of modular symbols, see \cite[\S 2]{Gunnels:Notes} and \cite[\S 3.3]{Stein:ModularForms}. Let
$\widetilde{\MM}_2(\GG_0(N))$ be the free abelian group on formal {\it modular symbols} $\{r\to s\}$ with
$r,s\in\PP^1(\QQ)$, modulo the relations
\begin{align*}
	\{r\to s\} + \{s\to t\} + \{t\to r\} &= 0
	&&\text{for all } r,s,t\in \PP^1(\QQ), \\
	\{r\to r\} &= 0
	&&\text{for all } r\in \PP^1(\QQ).
\end{align*}
The group $\Gamma_0(N)$ acts on $\PP^1(\QQ)$ by linear fractional
transformations, and hence on $\widetilde{\MM}_2(\GG_0(N))$ by
\[
\gamma\cdot\{r\to s\} := \{\gamma r \to \gamma s\}.
\]
Manin observed that relative homology $H_1(X_0(N),C;\ZZ)$ can be realized
as the $\Gamma_0(N)$-coinvariants of $\widetilde{\MM}_2(\GG_0(N))$, \cite{Manin:ParabPts}.

\begin{proposition}
	There is a natural isomorphism of abelian groups
	\[
	H_1(X_0(N),C;\ZZ) \cong \MM_2(\GG_0(N)) := \GG_0(N)\backslash\widetilde{\MM_2}(\GG_0(N))
	\]
	where $\MM_2(\GG_0(N))$ 
	denotes the quotient module by the $\GG_0(N)$-action.
\end{proposition}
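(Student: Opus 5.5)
We prove this via the standard Manin construction, building the map explicitly and then inverting it. Let $\HH^*:=\HH\cup\PP^1(\QQ)$ with the horocycle (Satake) topology, so that $X_0(N)=\Gamma_0(N)\backslash\HH^*$ and $C$ is the image of $\PP^1(\QQ)$. For $r,s\in\PP^1(\QQ)$ let $\delta_{r,s}$ be the hyperbolic geodesic in $\HH^*$ from $r$ to $s$, and let $\pi:\HH^*\to X_0(N)$ be the quotient map. Since both endpoints of $\pi(\delta_{r,s})$ lie in $C$, this path is a relative $1$-cycle, and we define
\[
\Phi:\widetilde{\MM}_2(\GG_0(N))\longrightarrow H_1(X_0(N),C;\ZZ),\qquad \{r\to s\}\longmapsto[\pi(\delta_{r,s})].
\]
The next step is to check that $\Phi$ is well defined and $\Gamma_0(N)$-invariant. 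The relation $\{r\to r\}=0$ holds because the corresponding path is degenerate. For the three-term relation, the geodesic triangle on $r,s,t$ bounds a $2$-simplex in the contractible space $\HH^*$, so its image is a boundary. Invariance holds because $\pi\circ\gamma=\pi$ for $\gamma\in\Gamma_0(N)$. Hence $\Phi$ factors through the coinvariants $\MM_2(\GG_0(N))$.

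To prove surjectivity, we use the long exact sequence of the pair $(X_0(N),C)$:
\[
0\to H_1(X_0(N);\ZZ)\to H_1(X_0(N),C;\ZZ)\to \ZZ[C]^{0}\to 0,
\]
where $\ZZ[C]^0$ is the group of degree-zero divisors on $C$. Any difference of cusps $[s]-[r]$ lifts to $\Phi\{r\to s\}$. For the absolute part, a loop based at $\pi(\infty)$ lifts to a path in $\HH^*$ from $\infty$ to $\gamma\infty$, and so it equals $\Phi\{\infty\to\gamma\infty\}$. Here we use that $\Gamma_0(N)$ is generated by elliptic and parabolic elements together with the hyperbolic generators, so that $\pi_1(X_0(N))$ is a quotient of $\Gamma_0(N)$ (Manin). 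Therefore $\Phi$ is surjective.

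The main obstacle is injectivity. We plan to construct an explicit inverse $\Psi$. Given a relative cycle, we homotope it to lie in $\pi$ of a fixed fundamental domain's tessellation, i.e.\ to a combination of images of edges of the Farey tessellation, which are geodesics between cusps. We then send each such edge to the corresponding symbol. The crux is showing that $\Psi$ respects homotopies. A homotopy passes across Farey triangles, which is handled by the three-term relation. It may also pass across elliptic points of order $2$ or $3$, where we must check that the corresponding stabilizer relations already follow from the three-term relation combined with $\Gamma_0(N)$-coinvariance. Alternatively, we can argue via group homology: $\MM_2(\GG_0(N))\cong H_0(\Gamma_0(N),\widetilde{\MM}_2)$, and $\widetilde{\MM}_2\cong\ZZ[\PP^1(\QQ)]^0$ by Manin's trick, since it is the augmentation kernel of $\ZZ[\PP^1(\QQ)]$. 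We then compare with the equivariant homology of the contractible pair $(\HH^*,\PP^1(\QQ))$. Because the finite stabilizers act freely on the cusps, the resulting spectral sequence degenerates, and this would give the isomorphism. We would try the group-homology route first, since it avoids delicate homotopy bookkeeping.
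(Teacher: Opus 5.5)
Your well-definedness and surjectivity arguments are fine, but the injectivity step cannot be completed. Read literally, with $\MM_2(\GG_0(N))$ the integral coinvariants, the statement is false, and it fails exactly at the check you postpone at elliptic points.

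For the symbol $x$ whose geodesic passes through the fixed point of an elliptic element of order $m$, coinvariance only gives $m\cdot x=0$, not $x=0$, whereas $\Phi(x)=0$. Concretely, for $N=5$ the element $e=\begin{psmallmatrix}3&-2\\5&-3\end{psmallmatrix}\in\GG_0(5)$ swaps $1/2$ and $1$ and fixes $(3+i)/5\in\delta_{1/2,1}$. Hence $2\{1/2\to1\}=0$ in the coinvariants. Also $\pi(\delta_{1/2,1})$ runs from a cusp to the elliptic point and back along itself, so $\Phi\{1/2\to1\}=0$.

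Yet $\{1/2\to1\}\neq0$ in the coinvariants. Applying $H_*(\GG_0(N),-)$ to $0\to\ZZ[\PP^1(\QQ)]^0\to\ZZ[\PP^1(\QQ)]\to\ZZ\to0$ and using Shapiro's lemma gives
\[
0\longrightarrow \overline{\GG}^{\,\mathrm{ab}}/\langle\text{parabolic elements}\rangle\longrightarrow\MM_2(\GG_0(N))\longrightarrow\ZZ[C]^0\longrightarrow0 .
\]
Here $\overline{\GG}$ is the image of $\GG_0(N)$ in $\mathrm{PSL}_2(\ZZ)$, and the first map is induced by $\gamma\mapsto\{x\to\gamma x\}$. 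On the topological side, $H_1(X_0(N);\ZZ)=\overline{\GG}^{\,\mathrm{ab}}/\langle\text{elliptic and parabolic elements}\rangle$. The standard presentation for signature $(g;m_1,\dots,m_r;c)$ with $c\ge1$ then gives $\ker\Phi\cong\bigl(\bigoplus_i\ZZ/m_i\bigr)/\langle(1,\dots,1)\rangle$. For $\GG_0(5)$, of signature $(0;2,2;2)$, this is $\ZZ/2$, generated by $\{1/2\to1\}$. So the coinvariants are $\ZZ\oplus\ZZ/2$, while $H_1(X_0(5),C;\ZZ)\cong\ZZ$. The same happens whenever $\GG_0(N)$ has two elliptic points of equal order (e.g.\ $N=7,13$).

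Your group-homology route has the same defect. The identification $\MM_2(\GG_0(N))\cong H_1^{\GG_0(N)}(\HH^*,\PP^1(\QQ))$ is correct. However, the comparison map to the homology of the orbit pair is not an integral isomorphism in general. The condition you invoke is not the relevant one: cusp stabilizers are infinite cyclic, not finite. The finite stabilizers that matter are those of elliptic points in $\HH$. Their homology groups $H_q(\ZZ/m,\ZZ)$, $m\in\{2,3\}$, enter the isotropy spectral sequence, do not vanish, and produce precisely the torsion kernel above.

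What your argument plus this kernel computation does prove is that $\Phi$ is surjective with kernel equal to the torsion subgroup. Thus $\Phi$ induces an isomorphism from $\MM_2(\GG_0(N))$ modulo torsion onto $H_1(X_0(N),C;\ZZ)$, hence also after $\otimes\ZZ[1/6]$ or $\otimes\QQ$. It is an integral isomorphism when $\GG_0(N)$ has at most one elliptic point of each order. That is the form in which Manin's theorem is usually stated. The paper offers only a citation, and its definition of $\MM_2(\GG_0(N))$ omits the torsion quotient, so the corrected statement is the one you should prove.
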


Elements of $H_1(X_0(N),C;\ZZ)$ are thus represented by finite $\ZZ$-linear
combinations of symbols $\{r\to s\}$ modulo the above relations and the
$\Gamma_0(N)$-action. 

\subsubsection*{Rank formula}

The rank of the relative homology $H$ can be expressed in terms of the genus
and the number of cusps.

\begin{proposition}
	Let $g=g(X_0(N))$ be the genus of $X_0(N)$ and let $c=\#C$ be the number of
	cusps. Then
	\[
	\mathrm{rk}_\ZZ\; H_1(X_0(N),C;\ZZ) = 2g + (c-1).
	\]
\end{proposition}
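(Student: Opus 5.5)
The plan is to use the long exact sequence of the pair $(X_0(N),C)$ in singular homology with integer coefficients. Since $C=\{c_1,\dots,c_c\}$ is a finite set of points, $H_1(C;\ZZ)=0$ and $H_0(C;\ZZ)\cong\ZZ^{c}$. We will also use that $X_0(N)$ is a compact connected Riemann surface of genus $g$. Connectedness holds because $X_0(N)$ is a quotient of the connected space $\HH\cup\PP^1(\QQ)$. Hence $H_1(X_0(N);\ZZ)\cong\ZZ^{2g}$ and $H_0(X_0(N);\ZZ)\cong\ZZ$.

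The relevant segment of the long exact sequence is
\[
0=H_1(C;\ZZ)\longrightarrow H_1(X_0(N);\ZZ)\longrightarrow H_1(X_0(N),C;\ZZ)\xrightarrow{\;\partial\;} H_0(C;\ZZ)\longrightarrow H_0(X_0(N);\ZZ)\longrightarrow H_0(X_0(N),C;\ZZ).
\]
The map $H_0(C;\ZZ)=\ZZ^c\to H_0(X_0(N);\ZZ)=\ZZ$ is the augmentation, sending each cusp to the class of a point. It is surjective as soon as $C\neq\emptyset$, and there is always at least the cusp $\infty$. Equivalently, $H_0(X_0(N),C;\ZZ)=0$, because every point of the connected space can be joined by a path to a cusp. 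The kernel of the augmentation is the degree-zero subgroup $\ZZ^{c}_0\cong\ZZ^{c-1}$.

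Exactness then gives a short exact sequence
\[
0\longrightarrow \ZZ^{2g}\longrightarrow H_1(X_0(N),C;\ZZ)\xrightarrow{\;\partial\;}\ZZ^{c-1}\longrightarrow 0 .
\]
Because $\ZZ^{c-1}$ is free, this sequence splits. So $H_1(X_0(N),C;\ZZ)\cong\ZZ^{2g+c-1}$, which is free of the stated rank.

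In the modular-symbol language of the preceding proposition, $\partial\{r\to s\}=[s]-[r]$ is the boundary map onto degree-zero divisors supported on the cusps. The image of $H_1(X_0(N);\ZZ)$ is the subgroup of cuspidal modular symbols. So the argument is just the standard decomposition of modular symbols into cuspidal and boundary parts.

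The only step needing care is the identification of the absolute homology $H_1(X_0(N);\ZZ)\cong\ZZ^{2g}$ and of $H_1(C)$, $H_0$. This requires that $X_0(N)$, with its compactified topology, is a genuine compact orientable surface of genus $g$, despite the elliptic points and the cusps. I would justify this by citing that $X_0(N)$ is a compact Riemann surface, as recalled above. The rest is formal.
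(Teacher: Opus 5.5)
Your proposal is correct and follows the paper's proof: you use the long exact sequence of the pair $(X_0(N),C)$, identify $H_0(C;\ZZ)\to H_0(X_0(N);\ZZ)$ with the augmentation $\ZZ^c\to\ZZ$ whose kernel has rank $c-1$, and obtain the short exact sequence $0\to H_1(X_0(N);\ZZ)\to H_1(X_0(N),C;\ZZ)\to\ker(\ZZ^c\to\ZZ)\to 0$. You also make explicit two points the paper leaves tacit: $H_1(C;\ZZ)=0$ gives the injectivity on the left, and the sequence splits, so $H_1(X_0(N),C;\ZZ)$ is in fact free of rank $2g+c-1$.
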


\begin{proof}
	Consider the long exact sequence in homology associated to the pair
	$(X_0(N), C)$:
	\[
	\cdots \to H_1(X_0(N);\ZZ) \to H_1(X_0(N),C;\ZZ)
	\to H_0(C;\ZZ) \to H_0(X_0(N);\ZZ) \to 0.
	\]
	We have $H_0(X_0(N);\ZZ)\cong \ZZ$ and $H_0(C;\ZZ)\cong \ZZ^c$. The map
	$H_0(C;\ZZ)\to H_0(X_0(N);\ZZ)$ is the augmentation map
	$\ZZ^c \to \ZZ$ sending $(n_1,\dots,n_c)\mapsto \sum_i n_i$, whose kernel has
	rank $c-1$. Since $X_0(N)$ is connected, the boundary map
	$H_1(X_0(N),C;\ZZ)\to H_0(C;\ZZ)$ is surjective onto this kernel, and we obtain
	a short exact sequence
	\[
	0 \to H_1(X_0(N);\ZZ) \to H_1(X_0(N),C;\ZZ) \to \ker(\ZZ^c\to\ZZ) \to 0.
	\]
	The result follows from the fact that $H_1(X_0(N);\ZZ)$ is free of rank $2g$, and $\ker(\ZZ^c\to\ZZ)$ is free of
	rank $c-1$.
\end{proof}


\subsubsection*{Hecke action on modular symbols} The Hecke algebra $\mathbb T$ of level $\Gamma_0(N)$ is generated by the
usual Hecke operators $T_\ell$ for primes $\ell\nmid N$ and $U_\ell$ for
$\ell\mid N$. These operators act on cusp forms of weight $2$ and level
$\Gamma_0(N)$, but also on the homology $H$ via correspondences on
$X_0(N)$, see \cite[\S 2.4]{Cremona:Algorithms}. More precisely, for each $m\ge1$ the Hecke operator is induced by the Hecke
correspondence $X_0(N)\xleftarrow{\pi_1}X_0(N,m)\xrightarrow{\pi_2}X_0(N)$,
and its action on relative homology is the push--pull map
$T_m=(\pi_2)_*\circ\pi_1^*$.

This Hecke action 
	provides a mechanism by which the class group action on supersingular
	curves 
	can be transferred to
	an action on homology, as we explain in the next section.

%% file: SECTIONS/Representations.tex
\section{Class-group representations on modular-symbol homology}
\label{sec:representations}

In this section we explain how the ideal class group $\Pic(\cO)$ gives rise
to a Hecke-equivariant action on a suitable submodule of the relative homology
$H_1(X_0(pN),C;\ZZ)$, and how this allows us to associate a homology class to
an oriented supersingular elliptic curve. We present three different approaches and prove that
they agree in $H_1(X_0(pN),C;\ZZ)$.

Throughout this section we fix an imaginary quadratic field $K$ and an order $\cO\subset K$ of discriminant $\disc(\cO)=\Delta$; we also fix a prime $p$ and a supersingular elliptic curve $E_0/\overline{\FF}_{p}$ with a primitive orientation
$\iota_0:\cO\hookrightarrow\End(E_0)$.

\subsection{Construction 1: Brandt module}

Let us fix an imaginary quadratic order $\cO\subset K$ and a primitively $\cO$-oriented supersingular curve $(E_0,\iota_0)$. Also fix a level $N$ coprime to $p$. A cyclic subgroup $C\subset E_0[N]$ defines an Eichler order $R_N\subseteq\fA_{p,\infty}$ of level $N$, and $(E_0,C)$ corresponds to a distinguished
left ideal class $I_0$ in $\Cl(\mathcal R_N)$. Each invertible $\cO$-ideal
$\fa$ defines a new oriented curve $(E_\fa,\iota_\fa)$ and a new left ideal
class $I_\fa$. 

The set of left $R_N$-ideal classes is finite, and the associated
\emph{Brandt module}
\[
\mathbb B := \ZZ[\Cl(\mathcal O_B)]
\]
is a free abelian group with basis indexed by these ideal classes.  For each
prime $\ell\nmid \mathrm{disc}(\fA_{p,\infty})\,N$, the Hecke operator $T_\ell$ acts on
$\mathbb B$ via the classical Brandt matrices, encoding $\ell$-neighbor
relations between ideal classes; see \cite{Pizer:ModForms} and
\cite[\S 3.2]{Kohel:Hecke}.

At the level of the Brandt module, the ideal action can be
encoded by a $\ZZ$-linear operator
\[
T_\fa : \mathbb B \longrightarrow \mathbb B.
\]

The Jacquet--Langlands correspondence relates the Hecke module
$\mathbb B\otimes\QQ$ to a space of weight-$2$ cusp forms on $\GL_2(\QQ)$.  More
precisely, since $\fA_{p,\infty}$ has discriminant $p$ and $R_N$ has level $N$, then
$\mathbb B\otimes\QQ$ is Hecke-isomorphic to the subspace of
$S_2(\Gamma_0(pN))$ consisting of forms that are new at $p$, see \cite[\S 4/5]{Martin:Basis} and \cite{DV:Hilbert}.  

On the other hand, by the Eichler--Shimura isomorphism, the space of weight-$2$ cusp
forms on $\Gamma_0(pN)$ embeds Hecke-equivariantly into the singular homology
of the modular curve $X_0(pN)$, see \cite{Darmon:RatPts} for more details.  Passing to relative homology with respect to the
cusps yields a Hecke-stable lattice
\[
H' \subseteq H_1(X_0(pN),C;\ZZ)
\]

Under these correspondences, each operator $T_\fa$ induces an automorphism
$T_\fa'$ of $H'$. Passing to ideal classes, we obtain a homomorphism
\[
\rho: \Pic(\cO) \to \Aut_\ZZ(H'),
\qquad [\fa] \mapsto T_\fa'.
\]

We do not claim that $\rho$ is faithful in general; its kernel depends on
$(K,\cO,N)$ and the chosen component. However, $\rho$ is nontrivial, and in
generic situations its kernel is expected to be small.

\begin{definition}
	Let $H'$ and $\rho$ as above. Fix a nonzero base class $\gamma_0\in H'$. For an ideal class $[\fa]\in\Pic(\cO)$, we define the associated homology
	class
	\[
	\gamma^{(1)}([\fa]) := \rho([\fa])(\gamma_0) \in H'.
	\]
	If $(E,\iota)$ is an $\cO$-oriented supersingular elliptic curve lying in the
	$\Pic(\cO)$-orbit of a fixed base curve $(E_0,\iota_0)$, we choose an ideal
	class $[\fa]$ such that $(E,\iota)\simeq [\fa]\star(E_0,\iota_0)$ and set
	\[
	\gamma^{(1)}(E,\iota) := \gamma^{(1)}([\fa]).
	\]
\end{definition}

The homology class $\gamma^{(1)}(E,\iota)$ depends on the choice of $[\fa]$ only up
to the stabilizer of $\gamma_0$ under the representation $\rho$.

\begin{definition}
	The stabilizer of $\gamma_0$ is the subgroup
	\[
	\Stab(\gamma_0)
	:= \{[\fc]\in\Pic(\cO) : \rho([\fc])(\gamma_0)=\gamma_0\}.
	\]
\end{definition}

If $(E,\iota)\simeq[\fa]\star(E_0,\iota_0)\simeq[\fb]\star(E_0,\iota_0)$, then
$[\fb]^{-1}[\fa]$ lies in the stabilizer of $(E_0,\iota_0)$ on the
supersingular side, which is known to be finite, see \cite{Deuring} and \cite[\S~5.2]{Kohel1996}.  Provided this finite subgroup maps into $\Stab(\gamma_0)$, the
assignment $(E,\iota)\mapsto\gamma(E,\iota)$ is well defined up to the finite
ambiguity $\Stab(\gamma_0)$.

\medskip
\noindent\textbf{Remark.}
From a cryptographic perspective, exact injectivity of the map
$(E,\iota)\mapsto\gamma(E,\iota)$ is neither expected nor required.  The map is
used to sample homology classes from a structured but exponentially large
subset of $H'$, and any bounded non-injectivity arising from finite
stabilizers does not weaken the hardness assumptions underlying the inversion
problems considered in Section~\ref{sec:MSI}.
	
The role of quaternion algebras and the
Jacquet--Langlands correspondence in this section is mostly conceptual: it
provides a representation-theoretic justification for the existence and
structure of the action we use.  All constructions needed later for
	cryptographic purposes (in particular, the computation of homology classes
	and $\ell$-adic integrals) are carried out directly on the modular curve $X_0(N)$,
	without recourse to quaternionic algorithms.

%% file: SECTIONS/EquivalentConstructions.tex
\subsection{Construction 2: Heegner points and geometric geodesic cycles}
\label{subsec:constr2}

We now describe a geometric construction of a relative homology class on the
modular curve $X_0(pN)$ associated to an ideal class in $\Pic(\cO)$, using
complex multiplication and geodesic paths on the analytic modular curve.
This construction is classical and goes back to the theory of Heegner points.

Over the complex numbers, the modular curve $X_0(pN)$ admits the analytic
uniformization $X_0(pN)(\CC)\cong
\Gamma_0(pN)\backslash \HH^\ast$ where $\HH^\ast = \HH \cup \PP^1(\QQ)$.

Assume that $(pN,\Delta)=1$ 
The theory of complex multiplication associates to each proper
invertible $\cO$-ideal $\fa$ a CM elliptic curve $E_\fa/\CC$ together with
a cyclic subgroup $C_\fa\subset E_\fa$ of order $pN$, yielding a point
\[
x_\fa \;:=\; (E_\fa, C_\fa) \;\in\; X_0(pN)(\CC).
\]

The resulting set of CM points of discriminant $\Delta$ on $X_0(pN)$ is
nonempty and is canonically parametrized by $\Pic(\cO)$, with the natural
Galois and Hecke actions corresponding to the ideal class action, \cite[Ch. 3]{Darmon:RatPts}.

Fix a base CM point $x_0=x_{\cO}$ corresponding to the trivial class, and
fix a base cusp $c_\infty\in C$, e.g.\ the class of $\infty$.
For $[\fa]\in\Pic(\cO)$ choose any continuous path
\[
\eta_\fa:[0,1]\to X_0(pN)(\CC)
\qquad\text{with}\qquad
\eta_\fa(0)=x_0,\;\eta_\fa(1)=x_\fa.
\]
Then $\eta_\fa$ defines a class in the relative homology group
$H_1(X_0(pN),\{x_0,x_\fa\};\ZZ)$.
Its boundary is
\[
\partial[\eta_\fa]=[x_\fa]-[x_0]\in H_0(\{x_0,x_\fa\};\ZZ).
\]
Choose a base cusp $c_\infty\in C$ and for each CM point $x$ choose a path $\delta_x$ from $x$ to $c_\infty$.  We can define a relative
$1$-cycle in the pair $(X_0(pN),C)$ by
\[
\widetilde\gamma^{(2)}([\fa])
\;:=\;
\eta_\fa + \delta_{x_\fa}-\delta_{x_0}.
\]
and this yields a class
\[
\gamma^{(2)}([\fa]) \;:=\;
[\widetilde\gamma^{(2)}([\fa])]
\;\in\;
H_1(X_0(pN),C;\ZZ).
\]

\subsubsection*{Independence of choices}
Changing $\eta_\fa$ with fixed endpoints changes $\eta_\fa$ by an absolute
	$1$-cycle, hence changes $\gamma^{(2)}([\fa])$ by an element of
	$H_1(X_0(pN);\ZZ)$. In the same way, changing $\delta_x$ for a fixed $x$ changes $\delta_x$ by an absolute
	$1$-cycle as well.

\subsubsection*{Modular-symbol description}
Via the Manin-symbol presentation of $H_1(X_0(pN),C;\ZZ)$ (see \S \ref{sec:ManinSymbols}), the class $\gamma^{(2)}([\fa])$ may be
represented by an explicit $\ZZ$-linear combination of Manin symbols once one
chooses matrices sending $\infty$ to the cusps and sending a fixed CM
parameter $\tau_0$ to a parameter $\tau_\fa$ for $x_\fa$.

\subsection{Construction 3: Bruhat--Tits graph and harmonic cocycles}
\label{subsec:constr3}

A third, more $\ell$-adic, viewpoint comes from the Cerednik--Drinfeld uniformization \cite{Cerednik:Uniformization,Drinfeld:padic} and the theory
of harmonic cocycles on Bruhat--Tits trees.  Throughout this subsection,
$\ell$ denotes a prime dividing $pN$ such that the relevant modular or Shimura curve admits
Cerednik--Drinfeld uniformization at $\ell$.

Over $\QQ_\ell$, the curve admits a rigid-analytic uniformization as a quotient
of the Drinfeld upper half-plane $\mathcal H_\ell$ by a discrete subgroup
$\Gamma \subset \PGL_2(\QQ_\ell)$, \cite[\S~5.3]{Darmon:RatPts}:
\[
X^{\mathrm{an}} \;\simeq\; \Gamma \backslash \mathcal H_\ell .
\]
The skeleton of $\mathcal H_\ell$ is the Bruhat--Tits tree
$\mathcal T_\ell$ of $\PGL_2(\QQ_\ell)$, and the skeleton of the quotient
$X^{\mathrm{an}}$ is the finite graph
\[
\Gamma \backslash \mathcal T_\ell .
\]
A more precise statement can be found in \cite[\S~3]{Iezzi:BTTrees}, \cite[\S~2]{Franc:Masdeu} and \cite{Teitelbaum:Drienfield}.

Let $H_1(\Gamma\backslash\mathcal T_\ell;\ZZ)$ denote the first homology of the
quotient graph.  Harmonic cocycles on $\mathcal T_\ell$ with respect to $\Gamma$
form a Hecke module naturally isomorphic to spaces of weight-$2$ modular cusp forms.
Moreover, there is a canonical Hecke-equivariant map
\[
H_1(\Gamma\backslash\mathcal T_\ell;\ZZ)
\;\longrightarrow\;
H_1(X,C;\ZZ),
\]
where $X$ denotes the corresponding algebraic curve and
$C$ its set of cusps, or boundary components in the Shimura case.
This map is induced by the specialization of analytic paths to algebraic
cycles and is compatible with the Eichler--Shimura isomorphism and with
$\ell$-adic integration, see \cite[Ch. 4]{Franc:Thesis}.

Fix a base oriented object, e.g. a primitively oriented supersingular curve, and let $v_0$ denote the corresponding vertex of the
quotient graph $\Gamma\backslash\mathcal T_\ell$.
The action of the ideal class group $\Pic(\cO)$ on oriented supersingular
curves induces, via the local embedding
$\cO\otimes\ZZ_\ell \hookrightarrow M_2(\QQ_\ell)$, an action by correspondences
on vertices of $\Gamma\backslash\mathcal T_\ell$; this is discussed explicitly in
the context of oriented curves and Bruhat--Tits trees in
\cite{Iezzi:BTTrees} and \cite[\S~4]{Franc:Masdeu}.

For an ideal class $[\fa]\in\Pic(\cO)$, choose a representative path in the
quotient graph from $v_0$ to a vertex $v_{\fa}$ corresponding to the oriented
curve $(E_\fa,\iota_\fa)$.  By fixing once and for all a spanning tree of
$\Gamma\backslash\mathcal T_\ell$, we may close this path to obtain a cycle
$c_{\fa}\in H_1(\Gamma\backslash\mathcal T_\ell;\ZZ)$.

We then define
\[
\gamma^{(3)}([\fa]) \;:=\;
\mathrm{sp}(c_{\fa})
\;\in\; H_1(X,C;\ZZ),
\]
where $\mathrm{sp}$ denotes the specialization map from graph homology to
algebraic relative homology.

\noindent{\bf Remark. }Different choices of representatives, spanning trees, or base edges modify
$c_{\fa}$ by boundaries or by cycles homologous to zero in the graph.
Under the specialization map, these changes correspond to absolute cycles in
$H_1(X;\ZZ)$, which vanish in relative homology.

\subsection{Equivalence of the three constructions}

These three constructions agree in
$H_1(X_0(pN),C;\ZZ)\otimes\QQ$, and hence 
in the integral lattice $H'$ up to finite index.

\begin{proposition}
	There exists a $\QQ$-subspace
	\[
	H'_\QQ \subseteq H_1(X_0(pN),C;\QQ),
	\]
	stable under the Hecke algebra away from $pN$, such that for every
	$[\fa]\in\Pic(\cO)$ the three constructions
	\[
	\gamma^{(1)}([\fa]),\quad
	\gamma^{(2)}([\fa]),\quad
	\gamma^{(3)}([\fa])
	\]
	define the same element of $H'_\QQ$.
\end{proposition}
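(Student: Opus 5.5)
The plan is to reduce the comparison to two facts: a normalization at the trivial class, and $\Pic(\cO)$-equivariance of each construction with respect to one and the same action on a Hecke-stable subspace. If all three constructions intertwine the class-group action with the same operators and agree at $[\cO]$, they agree on the whole orbit. Concretely, I take $H'_\QQ$ to be the smallest subspace of $H_1(X_0(pN),C;\QQ)$ that contains $\gamma^{(2)}([\cO])$ and is stable under the Hecke algebra $\mathbb T$ away from $pN$. I then replace it by its image under the idempotent $e_{p\text{-new}}\in\mathbb T\otimes\QQ$ cutting out the $p$-new part together with the boundary (Eisenstein) part. In Construction 1 I also fix $\gamma_0:=\gamma^{(2)}([\cO])$, so that the three classes coincide at the identity by definition.

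The first step is to realise the $\Pic(\cO)$-action on all three sides as a single action on $H'_\QQ$. On the Brandt side, $T_\fa$ is the class of the ideal $I_\fa$. Under Jacquet--Langlands, the Hecke-isomorphism $\mathbb B\otimes\QQ\simeq S_2(\Gamma_0(pN))^{p\text{-new}}$ carries it to the operator on $p$-new forms induced by the Gross--Deuring correspondence. This correspondence sends CM points $x_\fa$ of discriminant $\Delta$ on $X_0(pN)$, reduced mod $p$ (with $p$ inert or ramified in $K$), to the oriented supersingular curves $(E_\fa,\iota_\fa)$. Shimura reciprocity says $\mathrm{Gal}(H_\cO/K)\simeq\Pic(\cO)$ acts on the $x_\fa$ compatibly with $[\fa]\star(E_0,\iota_0)$ by Theorem~\ref{thm:Pic-torsor}. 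So $\gamma^{(2)}([\fa])$ is the image of $\gamma^{(2)}([\cO])$ under the operator $T'_\fa$ of Construction 1, once both are projected by $e_{p\text{-new}}$.

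For Construction 3, I use Cerednik--Drinfeld at $\ell=p$. The vertices of $\Gamma\backslash\cT_p$ are exactly the left ideal classes of the Eichler order $R_N$, so the graph homology is the Brandt module up to finite index. By Proposition~\ref{prop:path-length}, the path attached to $I_\fa$ ends at the vertex of $(E_\fa,\iota_\fa)$. Hence $c_\fa$ is $T_\fa$ applied to $c_{\cO}$ in $\mathbb B\otimes\QQ$. Since $\mathrm{sp}$ is Hecke-equivariant, as recalled in Construction 3, this gives $\gamma^{(3)}([\fa])=\rho([\fa])\,\gamma^{(3)}([\cO])$.

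The remaining point is to check that the base classes agree and that the choice ambiguities disappear. For this I pair everything with a Hecke eigenbasis. A vector of $H'_\QQ$ is determined by its period integrals against $p$-new eigenforms $f$, together with its boundary in $\ker(\ZZ^c\to\ZZ)$, by the rank formula and Eichler--Shimura. For each $f$, I compare $\int_{\gamma^{(i)}([\cO])}\omega_f$ using Gross's formula, which expresses the Heegner cycle pairing through the Brandt-module vector $e_{E_0}$. The boundary components agree because all three are built to start at the base cusp $c_\infty$.

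The ambiguities (choice of $\eta_\fa$, $\delta_x$, spanning tree) change the classes only by absolute cycles, and I kill these by working modulo the $\mathbb T$-stable complement of $H'_\QQ$. I expect this last step to be the main obstacle. Absolute cycles in the $p$-new part are \emph{not} automatically zero, contrary to the remark after Construction 3. So I would first try to pin the representatives canonically (geodesics on $\HH$ for Construction 2, and reduction of those geodesics to the tree for Construction 3) and prove the equality for these canonical choices. Only if that fails would I weaken the claim to equality in the quotient of $H'_\QQ$ by the span of the ambiguity cycles.
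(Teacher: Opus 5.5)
Your route is organized differently from the paper's. The paper reduces to showing that the three classes have the same periods against every newform in $H'_\QQ$ (by non-degeneracy of the Eichler--Shimura pairing) and then asserts this. You instead normalize at $[\cO]$, fixing $\gamma_0:=\gamma^{(2)}([\cO])$, and propagate by $\Pic(\cO)$-equivariance. That normalization is genuinely needed and the paper omits it. The equivariance step, however, does not go through:
\begin{itemize}
\item The operator $\rho([\fa])$ is not an element of $\mathbb T$, and nothing makes it commute with $\mathbb T$. Jacquet--Langlands, Eichler--Shimura and $\mathrm{sp}$ are only $\mathbb T$-equivariant, and the Jacquet--Langlands isomorphism is itself only determined up to $\mathbb T$-automorphisms. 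So transporting $T_\fa$ through them is non-canonical and gives no relation to the geometric classes of Constructions~2 and~3.
\item Shimura reciprocity does permute the points $x_\fa$. But $\mathrm{Gal}(H_\cO/K)$ does not act on $X_0(pN)(\CC)$ by homeomorphisms, so it induces no map on $H_1(X_0(pN),C;\QQ)$, and $\gamma^{(2)}([\fa])=T'_\fa\,\gamma^{(2)}([\cO])$ does not follow.
\item In Construction~3 you conflate two trees. Cerednik--Drinfeld does not apply to $X_0(pN)$ at $p$: by Deligne--Rapoport its reduction is two copies of $X_0(N)_{\FF_p}$, supersingular points index the \emph{edges} of the dual graph, and $R_N[1/p]^\times$ does not act on $\cT_p$ because $\fA_{p,\infty}$ is ramified at $p$. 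Proposition~\ref{prop:path-length}, by contrast, concerns $\ell$-isogeny graphs with $\ell\neq p$. At $\ell=p$ it would give paths of length $v_p(N(\fa))=0$ for representatives prime to $p$.
\end{itemize}

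The deeper obstruction is the one you postpone to the end, and it is fatal rather than technical. Since $\partial\widetilde\gamma^{(2)}([\fa])=(x_\fa-x_0)+(c_\infty-x_\fa)-(c_\infty-x_0)=0$, the chain $\widetilde\gamma^{(2)}([\fa])$ is a closed loop. As $\eta_\fa$ varies, its class runs through all of $H_1(X_0(pN);\ZZ)$. With the constant loop one gets $\gamma^{(2)}([\cO])=0$, so your normalization gives $\gamma_0=0$ (excluded in Construction~1) and $H'_\QQ=0$. Other choices of $\eta_\fa$ give $\gamma^{(2)}([\fa])\neq 0$, so no equivariance can hold for arbitrary choices.

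Your fallback does not help. The ambiguity cycles span $H_1(X_0(pN);\QQ)$ and every $\gamma^{(2)}([\fa])$ is absolute, so modulo them everything is $0$ and the weakened claim is empty. Gross's formula cannot fix the base classes either: it computes $L(f,\chi,1)$ from Brandt-module vectors. It does not compute the integrals $\int_{x_0}^{x_\fa}\omega_f$ entering $\gamma^{(2)}$, which are Abel--Jacobi values defined only modulo the period lattice, i.e.\ modulo exactly this ambiguity.

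You are right that the remark after Construction~3 is false: $H_1(X_0(pN);\ZZ)\hookrightarrow H_1(X_0(pN),C;\ZZ)$ by the exact sequence in the rank formula. The missing ingredient is therefore a canonical, choice-free definition of $\gamma^{(2)}$, $\gamma^{(3)}$ and $\gamma_0$ for which $[\fa]\mapsto\gamma^{(i)}([\fa])$ is an orbit map of a genuinely defined action. Neither your argument nor the paper's proof, which simply asserts that the periods coincide, supplies it.
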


\begin{proof}
On the cuspidal $\mathbb T^{(pN)}$-module $H'_{\QQ}$ the Eichler--Shimura
pairing with $S_2(\Gamma_0(pN))$ is non-degenerate, so it suffices to show that $\gamma^{(i)}([\fa])$ have the
same periods against every newform occurring in $H'_{\QQ}$.
	But (1) and (2) define the same $\mathbb T^{(pN)}$-equivariant CM/Heegner class in the newform quotient via Jacquet-Langlands and Eichler--Shimura, and (3) has the same pairings by $\ell$-adic uniformization and the identification of harmonic cocycles/graph cycles with modular symbols; hence $\gamma^{(1)}([\fa])=\gamma^{(2)}([\fa])=\gamma^{(3)}([\fa])$ in $H'_\QQ$.

\end{proof}

%% file: SECTIONS/pAdicVectors.tex
\section{$\ell$-adic period vectors and Coleman Integrals}
\label{sec:padic-vectors}

We now pass from homology classes on $X_0(N)$ to $\ell$-adic vectors via
Coleman abelian integration. 

\subsection{Weight-2 cusp forms and the period pairing}

Let $S_2(\Gamma_0(N))$ denote the space of weight-$2$ cusp forms of level
$\Gamma_0(N)$ with coefficients in $\CC$ or in a $\ell$-adic field $\QQ_\ell$.
We restrict attention to the Hecke-stable subspace corresponding to the
homology submodule $H' \subseteq H_1(X_0(N),C;\ZZ)$; concretely, this amounts
to working in the span of one or more Hecke eigenforms
$f_1,\dots,f_d \in S_2(\Gamma_0(N))$ corresponding to the newform attached to
our Brandt module.

Let $f \in S_2(\Gamma_0(N))$ be a holomorphic cusp form.
Classically, the \emph{period pairing} between $f$ and a homology class
$\gamma \in H_1(X_0(N),C;\ZZ)$ is defined by the complex integral
\[
\langle f,\gamma\rangle
\;:=\;
\int_\gamma f(z)\,dz,
\]
where $\gamma$ is represented by a singular $1$-chain on the Riemann surface
$X_0(N)(\CC)\cong \Gamma_0(N)\backslash\HH^\ast$ and $f(z)\,dz$ a holomorphic differential.
This pairing is $\CC$-bilinear and, by the Eichler--Shimura isomorphism,
induces a perfect pairing between the cuspidal part of
$H_1(X_0(N);\ZZ)\otimes\CC$ and $S_2(\Gamma_0(N))\oplus \overline{S_2(\Gamma_0(N))}$, \cite[\S~8]{Stein:ModularForms}.

A $\ell$-adic analogue of this period pairing was developed by Coleman in his
foundational work on $\ell$--adic integration on curves
\cite{Coleman:Integrals}. For a smooth curve with good reduction over a
$\ell$-adic field, Coleman defined a canonical theory of path-independent
$\ell$-adic line integrals of differentials, now known as
\emph{Coleman integrals}.

Over the last two decades, a series of works by Balakrishnan, Kedlaya, and
Tuitman developed practical algorithms for numerically computing these
integrals on curves, including methods based on explicit $\ell$-adic
cohomology and Frobenius lifts; see
\cite{BalaKedla:Integrals,Bala:ExplicitColeman,Tuitman:MapP}.
More recently, Chen, Kedlaya, and Lau \cite{CKL} introduced an efficient
approach specialized to modular curves, computing Coleman integrals directly
from modular forms data together with the
$\ell$-adic analytic uniformization.
A key feature of the Chen--Kedlaya--Lau approach is that it does not
require an explicit algebraic model of the modular curve $X_0(N)$:
instead, it works directly with $q$-expansions of modular forms and the
rigid-analytic uniformization, making it particularly well suited for
large levels and cryptographic applications

In this framework, for a weight-$2$ cusp form $f$ of finite slope at $\ell$ and
a relative homology class $\gamma \in H_1(X_0(N),C;\ZZ)$, one obtains a
well-defined $\ell$-adic period
\[
\langle f,\gamma\rangle_\ell \in \QQ_\ell,
\]
This pairing is $\QQ_\ell$-linear in both arguments and compatible with Hecke
operators,.

\subsection{The period vector}
\label{ssubec:periodvector}

Let $f_1,\dots,f_d$ be a fixed collection of weight-$2$ cusp forms corresponding
to the homology submodule $H'$. 
For $\gamma\in H'$, we define the 
$\ell$-adic period vector
\[
\Pi(\gamma) :=
\bigl(\langle f_1,\gamma\rangle_\ell\dots,\langle f_d,\gamma\rangle_\ell\bigr)
\in \QQ_\ell^d.
\]

For applications, we fix a
precision parameter $m\ge 1$ and reduce modulo $\ell^m$.

\begin{definition}
	Let $p$ be a prime 
	not dividing $N$.
	For $m\ge 1$ and $\gamma\in H'$, the \emph{truncated $\ell$-adic period vector}
	of $\gamma$ is
	\[
	\Pi_m(\gamma)
	:= \bigl(\langle f_1,\gamma\rangle_\ell,\dots,
	\langle f_d,\gamma\rangle_\ell\bigr)
	\bmod \ell^m
	\in (\ZZ/\ell^m\ZZ)^d.
	\]
\end{definition}

The map $\Pi_m:H'\to(\ZZ/\ell^m\ZZ)^d$ is $\ZZ$-linear, and its image is contained
in a subgroup whose size depends on $d$, $\ell$, and $m$. If the $f_i$'s are
chosen to be linearly independent, and the integrals are
sufficiently non-degenerate modulo $\ell^m$, then one expects $\Pi_m$ to have
full rank $d$ as a homomorphism of $\ZZ_\ell$-modules restricted to $H'\otimes\ZZ_p$.

The output space has size $\ell^{md}$, and in the following framework we will require
$\ell^{md}$ to be large compared to the number of candidate homology classes
in order to avoid excessive collisions.

%% file: SECTIONS/Example.tex
\subsection{A practical work-flow}
\label{sec:example}

In practice, we pass from an $\cO$-oriented supersingular elliptic curve to a
numerical $\ell$-adic period vector by combining Construction~2 with the
Coleman-integration algorithm of Chen--Kedlaya--Lau~\cite[\S~3]{CKL}.

Throughout, we fix a prime $p$ and work with supersingular
curves over $\overline{\FF}_{p}$ primitively oriented
by an imaginary quadratic order $\cO$ of discriminant $\Delta$.
We also fix a level $N$ with $(N,p)=1$, and a prime
$\ell\nmid Np$ used for $\ell$-adic analysis and 
integration.

\subsubsection*{Oriented curves: }Primitively $\mathcal O$-oriented
supersingular elliptic curves in characteristic $p$ are classified
by ideal classes of $\mathcal O$, or equivalently by classes of primitive
positive definite binary quadratic forms of discriminant
$\Delta=\disc(\mathcal O)$.
To a quadratic
form $Q=[a,b,c]$ one associates the CM point \cite{Stange:QuadForms}
\[
\tau=\frac{-b+\sqrt{\Delta}}{2a}\in\HH,
\]
and hence a point on $X_0(N)(\CC)$ via its $j$-invariant $j(\tau)\in\overline{\QQ}$.

In practice, one computes $j(\tau)$ either numerically via the analytic
$j$-function, and then obtain an algebraic approximation, or algebraically as a root of the Hilbert class
polynomial $H_D(X)\in\ZZ[X]$.  

\subsubsection*{Ideal action}
The action of $[\fa]\in\Pic(\cO)$ sends the CM point
$\tau$ associated to $(E,\iota)$ to the CM point $\tau_\fa$ associated to
$(E_\fa,\iota_\fa)$, yielding $j(E_\fa)=j(\tau_\fa)$ with $\tau_\fa$ determined by the corresponding quadratic form.

\subsubsection*{Level structure}
To obtain a point on the modular curve $X_0(N)$, we must additionally choose a
cyclic subgroup $C\subset E$ of order $N$. 
The resulting data $(E,C)$ defines a point $P=(E,C)\in X_0(N)(\CC)$
and the ideal action transports $(E,C)$ to corresponding level structures on
$E_\fa$, producing a point $Q:=P_\fa=(E_\fa,C_\fa)\in X_0(N)(\CC)$.

\subsubsection*{Hecke neighborhoods via modular polynomial}
We have two points $P,Q\in X_0(N)$ represented analytically by points on $\HH$ modulo $\Gamma_0(N)$.
The Hecke correspondence $T_\ell$ on $X_0(N)$ sends a point $P$ to the
collection of points corresponding to cyclic $\ell$-isogenies out of the associated elliptic curve. 
We note them as $\{j(P_i)\}_i$ and $\{j(Q_i)\}_i$

\subsubsection*{Local coordinate and residue discs}
Fix the base point $P$ and define the local parameter $t := j - j(P)$.
For a neighbor $P_i$ with $j$-invariant $j(P_i)$, we have $t(P_i)=j(P_i)-j(P)\in L.$
\subsubsection*{Differentials and their $t$-expansions}
Let $\omega$ be a holomorphic differential on $X_0(N)$.  Locally at $P$, the
differential can be expressed as a power series in $t$:
\[
\omega = \left(\sum_{n\ge 0} a_n t^n\right) dt,
\qquad a_n\in L.
\]

\subsubsection*{Tiny Coleman integrals}
Given the local expression of $\omega$ as above, a Coleman primitive is
obtained by formal integration:
\[
F_\omega(t) := \int \omega
= \sum_{n\ge 0} \frac{a_n}{n+1}\, t^{n+1}.
\]
For any point  $P_i$ in the same residue disc, the tiny integral
from $P$ to $P_i$ is computed by evaluating
\[
\int_P^{P_i}\omega \;=\; F_\omega\!\bigl(t(P_i)\bigr)
\;=\;
\sum_{n\ge 0} \frac{a_n}{n+1}\,
\bigl(j(P_i)-j(P)\bigr)^{n+1}.
\]

Coleman integrals of holomorphic
differentials on $X_0(N)$ compute the same linear functionals on
$H_1(X_0(N),C;\ZZ)$ as classical modular-symbol integrals.

\subsubsection*{Hecke symmetrization and eigenvalue normalization.}
Form Hecke-symmetrized combinations and apply the normalization factor $(\ell+1-a_\ell)^{-1}$ when working with eigen-differentials
$$(\ell+1-a_\ell)\; \int_P^Q\omega \; =\; \sum_{i=1}^{\ell+1}\; \left(\int_{Q_i}^{Q}\omega\; -\; \int_{P_i}^{P}\omega\right).$$

From the point of view of earlier sections, the $\ell$-isogenous neighbors
$P_i$ of the CM point $P$ correspond to the action of prime ideals of norm $\ell$
on the underlying quadratic form or, equivalently, on the oriented elliptic curve.  

The Hecke-symmetrized sum of tiny integrals therefore reflects the
horizontal class-group action on orientations, transported through the
Jacquet--Langlands and Eichler--Shimura correspondences to homology and
differentials on $X_0(N)$.
\subsubsection*{$\ell$-adic vector}Fix a basis $\{\omega_1,\dots,\omega_d\}$ of the relevant Hecke-stable
subspace of $S_2(\Gamma_0(N))$.  Applying the above procedure to each
$\omega_j$ yields a vector of $\ell$-adic integrals
\[
\Pi(P) :=
\bigl(
\langle \omega_1,\gamma_\fa\rangle_\ell,\dots,
\langle \omega_d,\gamma_\fa\rangle_\ell
\bigr)
\;\in\; \QQ_\ell^d,
\]
where $\gamma_\fa\in H_1(X_0(N),C;\ZZ)$ is the relative homology class determined
by the CM points $P$ and $Q$.

%% file: SECTIONS/ModSymbInvProb.tex
\section{The Modular Symbol Inversion Problem}
\label{sec:MSI}

\subsection{Path-encoded homology classes}
\label{subsec:pathencoded}

We now isolate the core hardness assumption underlying our constructions: the
difficulty of recovering a short relative homology class from partial
information about its $\ell$-adic period pairings.

Let
\[
H' \subseteq H_1(X_0(N),C;\ZZ)
\]
be the Hecke-stable $\ZZ$-lattice fixed in
Section~\ref{sec:representations}. Although the definition of $H'$ is
representation-theoretic, all homology classes used in practice will arise
from explicitly described and combinatorially simple paths, see \S~\ref{sec:representations}. These paths are most naturally described in terms of the Bruhat--Tits tree associated with $\PGL_2(\QQ_\ell)$, where
vertices correspond to suitable lattices or orientations and edges correspond
to elementary isogenies. 

In concrete cryptographic applications, and in particular when computing
$\ell$-adic period integrals, we will work with the modular-symbol
realization of Construction 2 in \S\ref{subsec:constr2} and evaluate periods using the algorithms of
Chen-Kedlaya-Lau~\cite{CKL}. This avoids the need to construct an
explicit algebraic or rigid-analytic model of $X_0(N)$ and allows direct
computation of $\ell$-adic integrals associated with modular symbols.

We therefore assume that there is a distinguished finite generating set $$\mathcal S = \{\sigma_1,\dots,\sigma_r\} \subset H'$$
such that each $\sigma_i$ represents an elementary step in the
underlying combinatorial structure, e.g.\; an oriented edge in the
Bruhat--Tits graph or a basic Manin symbol. A \emph{path of length $L$} is an
expression
\[
\gamma = \sigma_{i_1} + \cdots + \sigma_{i_L},
\]
subject to local compatibility constraints ensuring that successive steps
assemble into a valid path.

We denote by $\mathcal W_L$ the set of all valid paths of length at most $L$.
Combinatorially, the cardinality of $\mathcal W_L$ grows exponentially in $L$,
with growth rate determined by the branching of the underlying graph. This
exponential growth underlies both the expressive power of the construction
and the conjectured hardness of the inversion problems below.

\subsection{Definition of MSI}

Let
\[
\Pi_m : H' \longrightarrow (\ZZ/\ell^m\ZZ)^d
\]
be the truncated $\ell$-adic period map defined in
Section~\ref{ssubec:periodvector},.


\begin{definition}[MSI relation]
	The \emph{Modular Symbol Inversion relation} $R_{\mathrm{MSI}}$ is the subset
	of $(\ZZ/\ell^m\ZZ)^d\times\mathcal W_L$ given by
	\[
	R_{\mathrm{MSI}} := \{(y,\gamma) : \gamma\in\mathcal W_L,\; y=\Pi_m(\gamma)\}.
	\]
\end{definition}

We write $(y,\gamma)\in R_{\mathrm{MSI}}$ to indicate that $\gamma$ is a valid
\emph{short homology preimage} of $y$ under $\Pi_m$.

\begin{definition}[MSI problem]
	Given an element $y\in (\ZZ/\ell^m\ZZ)^d$ that is promised to satisfy
	\[
	y = \Pi_m(\gamma^\star)
	\]
	for some (unknown) $\gamma^\star \in \mathcal W_L$, the
	\emph{Modular Symbol Inversion (MSI) problem} is to find \emph{any}
	$\gamma\in\mathcal W_L$ such that $(y,\gamma)\in R_{\mathrm{MSI}}$.
\end{definition}

Note that $\gamma^\star$ need not be unique; there may be multiple short
paths or homology classes with the same period vector. The problem is to find
any valid witness $\gamma$.

\subsection{Comparison with SIS, LWE, and isogeny-path}

Fix a $\ZZ$-basis $\{\sigma_1,\dots,\sigma_r\}$ of $H'$, where
$r=\rank_\ZZ(H')$. Any homology class $\gamma\in H'$ can be represented by a
vector $\mathbf x\in\ZZ^r$ satisfying additional relations encoding the path
constraints.

With respect to this basis, the map $\Pi_m$ is represented by a matrix
\[
A \in M_{d\times r}(\ZZ/\ell^m\ZZ)
\]
such that
\[
\Pi_m(\gamma) \equiv A\mathbf x \pmod{\ell^m}.
\]

If one ignores the combinatorial path constraint $\gamma\in\mathcal W_L$, the
MSI problem reduces to finding a short integer vector $\mathbf x$ solving the
linear congruence $A\mathbf x \equiv y \pmod{\ell^m}$,
which is formally similar to lattice problems of SIS type, \cite{Ajtai:SIS,GPV:SIS}. However, this
analogy is limited and should not be overstated;
%
in the MSI framework, the matrix $A$ is highly structured, coming from period
	pairings, 
	and admissible vectors $\mathbf x$ are
	restricted to a sparse, exponentially small subset corresponding to valid
	paths.

Similarly, MSI differs fundamentally from LWE. In LWE, one recovers a secret
vector from noisy linear samples, \cite{Regev:LWE}. In MSI, the relation is exact, but the
difficulty arises from the structured sparsity of the solution space.
As a result, known worst-case/average-case reductions for SIS or LWE do not
apply to MSI, and no polynomial-time reduction between these problems is
currently known.

The MSI problem is conceptually closer to isogeny-based path-finding problems,
such as those underlying SQISign \cite{SQISign,CGL:Hash}, as both involve searching for short
paths in exponentially large graphs.
The analogy is nonetheless imperfect. In isogeny-based problems, the graph is
the supersingular isogeny graph, vertices are elliptic curves, and edges are isogenies. In MSI, the underlying graph is implicit:
it is the combinatorial structure generating $H'$, e.g.\ a quotient of a
Bruhat--Tits tree or the modular-symbol graph, and vertices correspond to
partial homology states rather than curves.

At present, there are no known reductions between MSI and isogeny path
problems. We treat them as distinct conjecturally hard problems, sharing only
a common exponential path-search flavor.

\subsection{Heuristic hardness and parameter choices}

Heuristically, one may model $\Pi_m$ as a random linear map on the set of
short paths. Let $\mathcal W_L$ denote the set of valid paths of length at
most $L$ and suppose $\#\mathcal W_L \approx \exp(cL)$ for some branching
constant $c>0$. If $\Pi_m$ behaves like a random function from $\mathcal W_L$
to a set of size $\ell^{md}$, then the expected number of collisions among
elements of $\mathcal W_L$ is about
\[
\frac{(\#\mathcal W_L)^2}{2\ell^{md}} \approx
\frac{\exp(2cL)}{2\ell^{md}}.
\]
Choosing parameters such that
\[
\ell^{md} \gg \exp(2cL)
\]
ensures that, with overwhelming heuristic probability, $\Pi_m$ is injective
on $\mathcal W_L$.

Even if collisions occur, the MSI problem only asks for some preimage
$\gamma\in\mathcal W_L$, not for uniqueness. The best generic attacks
on MSI are brute-force or meet-in-the-middle exploration of the path space,
	with complexity exponential in $L$ or in $L/2$ with meet-in-the-middle.
Alternatively, one can rely on lattice-based attacks on the linear system $A\mathbf x=y$, followed
	by attempts to ``round'' the resulting short vectors to valid paths.
	These are also exponential in a suitable dimension, and their effectiveness
	in exploiting the path constraint is currently unknown.

Quantumly, one can expect at most generic quadratic speedups (e.g.\ Grover
search, \cite{Grover:Quantum}) on such search spaces, leading to complexities of order
$\exp(c'L)$ for some $c'<c$ but still exponential in $L$.


As noted in Section~\ref{sec:representations}, the map from orientations to homology classes may not be injective, due to the kernel
of $\rho$ and the stabilizer $\Stab(\gamma_0)$. This means that several
orientations may yield the same homology class $\gamma$, and hence the same
period vector $\Pi_m(\gamma)$. 
This non-injectivity does not weaken the MSI assumption. The secret
object in MSI is the homology class $\gamma$, not the orientation itself. Any
finite multiplicity in the orientation-to-homology map only increases the
entropy of representations and does not provide an adversary with a shortcut
for inverting $\Pi_m$.


%% file: SECTIONS/CryptoConstructions.tex
\section{Cryptographic Constructions}
\label{sec:crypto-construct}

We now sketch two basic primitives whose security can be phrased in terms
of MSI-type assumptions. We fix the following global parameters:

\begin{itemize}[leftmargin=1.7em]
	\item a prime $p$ and an order $\cO\subset K$ in an
	imaginary quadratic field $K$;
	\item a supersingular curve $E_0/\overline{\FF}_{p}$ and an
	optimal embedding $\iota_0:\cO\hookrightarrow\End(E_0)$;
	\item a level $N$ with $(N,p)=1$ and a modular curve
	$X_0(N)$ with cusps $C$;
	\item a homology submodule $H'\subseteq H_1(X_0(N),C;\ZZ)$ and a
	representation $\rho:\Pic(\cO)\to\Aut_\ZZ(H')$ as in
	Section~\ref{sec:representations};
	\item a base class $\gamma_0\in H'$ and a generating set $\mathcal S$ of
	elementary path steps, together with a path length bound $L$ defining
	$\mathcal W_L$;
	\item an analysis prime $\ell$, a precision $m$, and a set of cusp forms
	$f_1,\dots,f_d$ defining $\Pi_m:H'\to(\ZZ/\ell^m\ZZ)^d$.
\end{itemize}

\subsection{An identification protocol}

A user chooses as secret key a random short homology class
$\gamma_{\mathrm{sk}}\in\mathcal W_L$ and sets
\[
y_{\mathrm{pk}} = \Pi_m(\gamma_{\mathrm{sk}})
\]
as public key. The identification protocol proceeds as follows:

\begin{enumerate}[leftmargin=1.7em]
	\item Prover (with secret $\gamma_{\mathrm{sk}}$) commits to a random
	path $\gamma_{\mathrm{com}}\in \mathcal W_L$ and sends
	$t = \Pi_m(\gamma_{\mathrm{com}})$ to the Verifier.
	
	\item Verifier samples a random challenge
	$c\in\{0,1\}$, or more generally $c\in\ZZ_q$ for a small public modulus $q$,
	and sends $c$ to the Prover.
	
	\item Prover computes the response homology class
	\[
	\gamma_{\mathrm{resp}} := \gamma_{\mathrm{com}} + c\,\gamma_{\mathrm{sk}}
	\;\in\; H',
	\]
	using a fixed reduction procedure to ensure that
	$\gamma_{\mathrm{resp}}$ is again represented by a valid short path
	$\gamma_{\mathrm{resp}}\in\mathcal W_{L'}$, and sends
	$\gamma_{\mathrm{resp}}$ to the Verifier.
	
	\item \emph{Verification.}
Verifier checks that $\gamma_{\mathrm{resp}}\in\mathcal W_{L'}$ and that
	\[
	\Pi_m(\gamma_{\mathrm{resp}})
	\;\equiv\;
	t + c\,y_{\mathrm{pk}}
	\pmod{\ell^m}.
	\]
\end{enumerate}

Completeness follows from the homomorphism property of $\Pi_m$. Special
soundness holds in the usual sense: given two accepting transcripts with the
same commitment $t$ and two distinct challenges $c\neq c'$, one can extract
\[
\gamma_{\mathrm{sk}}
=
\frac{\gamma_{\mathrm{resp}}-\gamma_{\mathrm{resp}}'}{c-c'}
\]
as a short homology class solving the MSI problem. The hardness of producing
such a witness without knowledge of $\gamma_{\mathrm{sk}}$ is therefore
reduced to MSI. Honest-Verifier zero-knowledge follows from the fact that
commitments $t$ are distributed as images of random short paths.

\subsection{A PRF based on iterated period mappings}

One may also envision pseudorandom functions keyed by short homology classes,
in the spirit of lattice- and isogeny-based PRFs \cite{PRF:Expnader,Goldreich:PRF,NTPRFs}.

Let $\gamma_{\mathrm{sk}}\in\mathcal W_L$ be the secret key. Given an input
bitstring $x\in\{0,1\}^*$, we can interpret $x$ as a word in the generators
$\mathcal S$, yielding a short path $\gamma_x\in\mathcal W_{L_x}$. Define a
combined path $\gamma_{\mathrm{sk},x}$ using a fixed path-combination rule
such as concatenation followed by reduction to bounded length. We output
\[
F_{\gamma_{\mathrm{sk}}}(x)
:=
\mathsf{KDF}\bigl(\Pi_m(\gamma_{\mathrm{sk},x})\bigr),
\]
where $\mathsf{KDF}$ is a standard hash-based key-derivation, e.g.\ HKDF~\cite{RFC5869} or a NIST-approved PRF-based KDF~\cite{NIST800108}
used to map $(\ZZ/\ell^m\ZZ)^d$ to a uniformly distributed bitstring.

Assuming the hardness of MSI and suitable pseudorandomness properties of
$\Pi_m$ on short paths, the resulting function is expected to be
computationally indistinguishable from a random function for adversaries
without knowledge of $\gamma_{\mathrm{sk}}$. A full proof would require a
precise model of the combinatorial structure of $\mathcal W_L$ and of
potential correlations introduced by the path-combination operation.

\subsection{Security parameters and parameter selection}
\label{subsec:security-params}

We choose parameters so that recovering a short path
$\gamma\in\mathcal W_L$ from its truncated $\ell$-adic period vector
\[
\Pi_m(\gamma)\in (\ZZ/\ell^m\ZZ)^d
\]
requires exponential work.  Here $\ell$ is the prime used for $\ell$-adic
integration, $m$ is the truncation depth, and $d$ is the number of independent
period coordinates, which is typically the dimension of the chosen Hecke component.

Let $B$ be the effective branching factor of the underlying path model, e.g.\; Bruhat--Tits trees or Manin-symbol dynamics.
Heuristically $\#\mathcal W_L \approx B^L$, so generic search costs
$\Theta(B^L)$, while meet-in-the-middle costs $\Theta(B^{L/2})$.  To target
$\lambda$-bit classical security we require $B^L\gtrsim 2^\lambda$ ($B^{L/2}\gtrsim 2^\lambda$ under
generic quantum quadratic speedups).

The output space has size $\#(\ZZ/\ell^m\ZZ)^d=\ell^{md}$.  To suppress collision-
style and meet-in-the-middle attacks that exploit $\Pi_m$, we impose the separation condition
\[
\ell^{md}\gtrsim (\#\mathcal W_L)^2 \approx B^{2L},
\]
which heuristically makes $\Pi_m$ essentially injective on $\mathcal W_L$.

For efficient $\ell$-adic integration we typically take $\ell\in\{3,5\}$ and then
choose $(m,L)$ to satisfy the inequalities above.  The level $N$ governs both
the ambient lattice size and the number of available period coordinates:
\[
r=\rank_\ZZ H_1(X_0(N),C;\ZZ)=2g(X_0(N))+\#C-1,
\qquad
d\le \dim S_2(\Gamma_0(N)).
\]
Increasing $N$ tends to increase $r$ and $d$, strengthening the entropy
$\ell^{md}$ but also increasing the cost of modular-symbol and modular-form
computations.  In prototypes we may take small $N$ and compensate with larger
$(m,L)$; for security-oriented parameters we take $N$ so that $d$ is large
(e.g.\ $d\ge 128$) allowing moderate $m$ while keeping generic attacks beyond
$2^{128}$.

Finally, the characteristic prime $p$ used for supersingular
sampling is independent of $q$; in security-oriented instantiations we take
$p$ at the $\sim256$-bit scale as in OSIDH/SQISign-style
parametrizations, while $\ell\in\{3,5\}$ is reserved for efficient period
computations.

%% file: SECTIONS/Conclusion.tex
\section{Conclusion and future work}
\label{sec:conclusion}

We have proposed a new algebraic--analytic encoding of oriented supersingular
elliptic curves into modular symbols and $\ell$-adic period vectors.

On the cryptographic side, we isolated the Modular Symbol Inversion problem as a natural candidate hardness assumption: given a period vector
$y=\Pi_m(\gamma^\star)$ arising from a short homology class, find any short
homology class $\gamma$ with $\Pi_m(\gamma)=y$. MSI sits at the intersection
of lattice linear algebra and combinatorial path problems. 

We sketched how MSI can underlie identification schemes, signatures, and pseudorandom functions. These constructions are at an exploratory stage and future work consists in analyzing parameter selection,
implementations, and resistance to structural attacks.